\newcommand{\no}[1]{#1}
\renewcommand{\no}[1]{}
\renewcommand{\Delta}{\upDelta}}
\date{\today}
\newtheorem{theorem}{Theorem}[section]
\newtheorem{lemma}{Lemma}[section]
\newtheorem{corollary}{Corollary}[section]
\theoremstyle{remark}
\newtheorem{remark}{Remark}[section]
\numberwithin{equation}{section}
\title[Determining the initial heat distribution]{Various stability estimates for the problem of determining  an initial heat distribution from a single measurement}
\author[Mourad Choulli]{Mourad Choulli}
\address{Institut \'Elie Cartan de Lorraine, UMR CNRS 7502, Universit\'e de Lorraine, Boulevard des Aiguillettes, BP 70239, 54506 Vandoeuvre les Nancy cedex - Ile du Saulcy, 57045 Metz cedex 01, France}
\email{mourad.choulli@univ-lorraine.fr}
\date{}
\begin{document}

\begin{abstract}
We consider the problem of determining the initial heat distribution in the heat equation from a point measurement. We show that this inverse problem is naturally related to the one of recovering the coefficients of Dirichlet series from its sum. Taking the advantage of existing literature on Dirichlet series, in connection with  M\"untz's theorem, we establish various stability estimates of H\"older and logarithmic type. These stability estimates are then used to derive the corresponding ones for the original inverse problem, mainly in the case of one space dimension. 
\\
In higher space dimensions, we are  interested to an internal or  a boundary measurement. This issue is closely related to the problem of observability arising in Control Theory. We complete and improve the existing results.

\medskip
\noindent
{\bf Keywords}: Heat equation, fractional heat equation, initial heat distribution, M\"untz's theorem, point measurement, boundary measurement, stability estimates of H\"older and logarithmic type.

\medskip
\noindent
{\bf MSC 2010}: 35R30
\end{abstract}

\maketitle

\tableofcontents

%=========================================================

\section{Introduction}\label{s1}

Inverse heat source problems appear in many branches of engineering and science. A typical application is for instance an accurate estimation of a pollutant source, which is  a crucial environmental safeguard in cities with dense populations (e.g. for instance \cite{EHH} and \cite{LTCW}). 

\smallskip
These  inverse problems are severely ill-posed, involving a strongly time-irreversible parabolic dynamics. Their mathematical analysis is difficult and still a widely open subject.

\smallskip
In the present work, we are concerned with sources located at the initial time. 

%============================

\subsection{State of art}

\smallskip 
{\it Approximation:} In an earlier paper Gilliam and Martin \cite{GM} considered the problem of recovering the initial data of the heat equation when the output is measured at points discrete in time and space. They observed that this problem is linked to the theory of Dirichlet series and a solution is found in the one dimensional case.

\smallskip
In \cite{GLM1}, Gilliam, Lund and Martin provide a simple and extremely accurate procedure for approximating the initial temperature for the heat equation on the line using a discrete time and spatial sampling. This procedure in based on ``sinc expansion''. Later, in \cite{GLM2}, the same authors give a discrete sampling scheme for the approximate recovery of initial data for one dimensional parabolic initial-boundary value problems on bounded interval.

\smallskip
The problem of recovering the initial states of distributed parameter systems, governed by linear partial differential equations, from finite approximate data, was studied by Gilliam, Mair and Martin in \cite{GMM}.

\smallskip
Li, Osher and Tsai considered in \cite{LOT} the inverse problem of finding sparse initial data from the sparsely sampled solutions of the heat equation. They prove that pointwise values of the heat solution at only a few locations are enough in an $\ell ^1$ constrained optimization to find the initial data.

\smallskip
 In a recent work, De Vore and Zuazua \cite{DZ} studied the problem of approximating accurately the initial data if the one-dimensional Dirichlet problem from finite measurements made at $(x_0,t_1),\ldots ,(x_0,t_n)$. They proved that, for suitable choices of the point location of the sensor, $x_0$,  there is a sequence $t_1<\ldots <t_n$ of time instants that guarantees the approximation with an optimal rate, of the order of $O(n^{-r})$ in the $L^2$-sense, depending on the Sobolev regularity of the datum being recovered.

\smallskip
It is worth mentioning that the orthogonality method in \cite{GLM2} leads to a best approximation in space, while in \cite{DZ} the authors show a best approximation in time.

\smallskip
{\it Uniqueness:} 
The determination of the initial distribution in the heat equation on a flat torus of arbitrary dimension was considered by Danger, Foote and Martin \cite{DFM}. They established that the observation of the solution along a geodesic  determines uniquely the initial heat distribution if and only if the geodesic is dense in the torus. Their result was obtained by using a Fourier decomposition together with results from the theory of almost periodic functions. Similar ideas were independently employed in \cite{CZ} in the context of the approximate controllability and unique continuation for the heat equation along oscillating sensor and actuator locations.

\smallskip
Let $u$ be the solution of the heat equation in the whole space $\mathbb{R}^d$. Nakamura, Saitoh and Syarif \cite{NSS} showed that the initial distribution is determined and simply represented by the observations $u(t,x_0,x')$ and $\partial _{x_1}u(t,x_0,x')$, $t\geq 0$, $x'\in \mathbb{R}^{d-1}$, for some fixed $x_0\in \mathbb{R}$.

\smallskip
{\it Stability:} To our knowledge there are only few works dealing with the stability issue. In a series of papers by Saitoh and al. \cite{SY, AS, TSS}, based on Reznitskaya transform and some properties of Bergman-Selberg spaces, they obtained Lipschitz stability estimate from a point or a boundary observation.

%===============================

\subsection{The relationship with the Dirichlet series}\label{sb1.1}
We consider the initial-boundary value problem, abbreviated to IBVP in the sequel, for the one dimensional heat equation
\begin{equation}\label{1.1}
\left\{
\begin{array}{lll}
(\partial _t-\partial _x^2)u=0\;\; \mbox{in}\; (0,\pi )\times (0,+\infty ),
\\
u(0,\cdot )=u(\pi ,\cdot )=0,
\\
u(\cdot ,0)=f.
\end{array}
\right.
\end{equation}
It has a unique solution $u_f\in C([0,+\infty ),L^2((0,\pi )))$ whenever $f\in L^2((0,\pi))$. This solution can be written in term of Fourier series as follows
\begin{equation}\label{1.2}
u_f(x,t)=\frac{2}{\pi}\sum_{k\geq 1}\widehat{f}_ke^{-k^2t}\sin (kx),
\end{equation}
where $\widehat{f}_k$ is the $k$-th Fourier coefficient of $f$:
\[
\widehat{f}_k=\frac{2}{\pi}\int_0^\pi f(x)\sin (kx)dx.
\]

Given a point $x_0\in (0,\pi )$ for the placement of the sensor, we address the question of reconstructing the initial distribution $f$ from $u_f(x_0,t)$, $t\in (0,T)$. In light of \eqref{1.2}, setting $a_k=\widehat{f}_k\sin (kx_0)$, we see that the actual problem is reduced to one of recovering the sequence $a=(a_k)$ from the sum of the corresponding Dirichlet series 
\[
\sum_{k\geq 1}a_ke^{-k^2t}.
\]
For this to be the case $x_0$ has to be chosen in a strategic way so that $\sin (kx_0) \ne 0$ for all $k \ge 1$.
%============================================

\subsection{Outline}
Section 2 is devoted to establish stability estimates for the problem of recovering the coefficients $a_k$ of a general Dirichlet series $\sum_{k\geq 1}a_ke^{-\lambda_k t}$ from its sum. Here $(\lambda _k)$ is a strictly increasing sequence of non negative real numbers, converging to infinity. The behavior of this problem depends on whether  the series $\sum 1/\lambda _k$ converges. 
\begin{itemize}
\item When $\sum 1/\lambda _k$ converges, $(e^{-\lambda _kt})$ admits a biorthogonal family and this simplifies the analysis.

\item The case $\sum \frac{1}{\lambda _k}=\infty$ is much harder and the stability estimates we obtain are weaker. 
\end{itemize}
In both cases, we prove stability estimate of logarithmic or H\"older type and to carry out our analysis we need to compare $\lambda _k$ with $k^\beta$, $\beta >0$ is given. In the case $\sum \frac{1}{\lambda _k}=\infty$, a gap condition  on the sequence $(\lambda _k)$ is also necessary in our analysis (see \eqref{2.12} in subsection \ref{sb2.2}).

\smallskip
In Section 3, we apply the results obtained in the Section 2 to the problem of determining the initial heat distribution in a one dimensional heat equation from an overspecified data. We get a various stability estimates of H\"older or logarithmic type. Our results include fractional heat equations of any order. We also establish a boundary observability inequality with an output located at one of the end points. This later enables us to obtain a logarithmic stability estimate for the inverse problem of recovering the initial condition from a boundary measurement.  

\smallskip
In  Section 4, we first consider the particular case of the (fractional) heat equation in a d-dimensional rectangle $\Omega$.  We show that if the Dirichlet eigenvalues of the laplacian in $\Omega$ are simple\footnote{This is a generic property among open bounded subsets of $\mathbb{R}^d$.}, then,  in some cases, the determination of the initial heat distribution can be reduced to the one dimensional case. This is achieved when the measurements consist in the values of the solution of the heat equation on $d$ affine $(d-1)$-dimensional subspaces. Next, we revisit the case when overdetermined data is an internal or a boundary observation. We comment the existing results and show that these laters can be improved using  recent observability inequalities.

\subsection{Notations} The unit ball of a Banach space $X$ is denoted by $B_X$. 

\smallskip
$\ell ^p=\ell^p (\mathbb{C})$, $1\leq p<\infty$, is the usual Banach space of complex-valued sequences $a=(a_n)$ such that the series $\sum |a_n|^p$ is convergent. We equip $\ell ^p$ with its natural norm
\[
\|a\|_{\ell ^p}=\left(\sum_{k\geq 1}|a_k|^p\right)^{1/p},\;\; a=(a_k)\in \ell ^p.
\]

$\ell ^\infty=\ell^\infty (\mathbb{C})$ denotes the usual Banach space of bounded complex-valued sequences $a=(a_n)$, normed by
\[
\|a\|_\infty =\sup_k|a_k|,\;\; a=(a_k)\in \ell ^\infty .
\]

For $\theta >0$,  the space $h^\theta =h^\theta (\mathbb{C})$ is defined as follows
\[
h^\theta  =\left\{b=(b_k)\in \ell ^2;\; \sum_{k\geq 1} \langle k\rangle ^{2\theta }|b_k|^2<\infty\right\},
\]
where $\langle k\rangle =(1+k^2)^{1/2}$.

\smallskip
$h^\theta $ is a Hilbert space when it is equipped with the norm
\[
\|b\|_{h^\theta }=\left( \sum_{k\geq 1} \langle k\rangle ^{2\theta }|b_k|^2 \right)^{1/2}.
\]

%===========================================================

\section{Determining the coefficients of a Dirichlet series from its sum}\label{s2}

We limit our study to Dirichlet series whose coefficients consist in sequences from $\ell ^p$, $1\leq p\leq \infty$. 

\smallskip
We pick a real-valued  sequence $(\lambda _k)$ satisfying $0<\lambda _1<\lambda _2< \cdots \lambda _k< \ldots$ and $\lambda _k\rightarrow +\infty$ as $k$ goes to $+\infty$.
 
\smallskip
To $a=(a_n)\in \ell ^p$, $1\leq p\leq \infty$, we associate the Dirichlet series
\[
F_a(t)=\sum_{n\geq 1}a_ne^{-\lambda _nt}.
\]
Let $1<p<\infty$ and $p'$ be the conjugate exponent of $p$. Observing that 
\[
a_ne^{-\lambda _nt}=a_ne^{-\lambda _nt/p}e^{-\lambda _nt/p'},
\]
we get by applying H\"older's inequality
\begin{align*}
|F_a(t)|&\leq \left(\sum_{n\geq 1}e^{-\lambda _nt}\right)^{1/p'}\sum_{n\geq 1}|a_n|^pe^{-\lambda _nt}
\\
&\leq \left(\sum_{n\geq 1}e^{-\lambda _nt}\right)^{1/p'}\sum_{n\geq 1}|a_n|^p,\;\; t>0.
\end{align*}
Also,
\begin{align*}
&|F_a(t)|\leq \sum_{n\geq 1}|a_n|,\;\; t\geq 0\; (p=1),
\\
&|F_a(t)|\leq \left(\sum_{n\geq 1}e^{-\lambda _nt}\right)\sup_n|a_n|,\;\; t> 0\; (p=\infty ).
\end{align*}
Therefore, the series $F_a(t)$ converges for $t>0$, for any $a\in \ell ^p$, $1\leq p\leq \infty$. 

\smallskip
From the classical theory of Dirichlet series (see for instance \cite{Wi}), we know that $F_a$ has an analytic extension to the half plane $\Re z>0$. Since a Dirichlet series is zero if and only if its coefficients are identically equal to zero, we conclude that the knowledge of $F_a$ in a subset of $\Re z>0$ having an accumulation point determines uniquely $a$. In other words, if $D\subset \{\Re z>0\}$ has an accumulation point and $F_a$ vanishes on $D$, then $a=0$.

\smallskip
The most interesting case is when $p=1$. We can define in that case the operator $\mathcal{U}$ by
\begin{align*}
\mathcal{U}:\ell ^1&\longrightarrow C_b([0,+\infty ))
\\ a&\longmapsto \mathcal{U}(a):=F_a,\; F_a(s)=\sum_{k\geq 1}a_ke^{-\lambda_ks}.
\end{align*}
Here, $C_b([0,+\infty ))$ is the Banach space of bounded continuous function on $[0,+\infty )$, equipped with the supremum norm
\[
\|F\|_\infty =\sup\{|F(s)|;\; s\in [0,+\infty )\}.
\]
Then $\mathcal{U}$ is an injective linear contractive operator.

\smallskip
Similarly to entire series, we address the question to know whether it is possible to reconstruct the coefficients of a Dirichlet series from its sum. This is always possible if the values of the sum is known in the half plane $\Re z>0$. More specifically, we have the following formula (see a proof in \cite{Wi}): for $\lambda _n<\lambda <\lambda _{n+1}$ and $\gamma >0$,
\[
\sum_{k=1}^na_k=\frac{1}{2i\pi}\mbox{pv}\int_{\gamma -i\infty}^{\gamma +i\infty}\frac{F_a(z)}{z}e^{\lambda z}dz.
\]

\smallskip
In the present section we aim to establish the modulus of continuity, at the origin, of the inverse of the mapping $a\in \ell ^p \rightarrow F_a{_{|D}}$, where $D$ is a subset of  $(0,+\infty)$. Rouhly speaking, we seek an estimate of the form
\[
\| a\|_{\ell ^p}\leq \Psi \left(\|F_a\|_{L^\infty (D)}\right),
\]
for $a$ in some appropriate subset of $\ell ^p$, $p=1$ or $2$,  where $\Psi$ is a continuous, non decreasing and non negative real-valued function satisfying $\Psi (0)=0$.
 
 \smallskip
 We discuss  separately the cases $\Lambda :=\sum_{k\geq 1}\frac{1}{\lambda _k}<\infty$ and $\Lambda :=\sum_{k\geq 1}\frac{1}{\lambda _k}=\infty$.
 
 \smallskip
It is worthwhile recalling  that the nature of the series $\sum_{k\geq 1}\frac{1}{\lambda _k}$ is related to M\"untz's theorem saying that the closure of the vector space spanned by $\{e^{-\lambda _kt};\; k\geq 1\}$ is dense in $C_0([0,+\infty ))=\{\varphi \in C_0([0,+\infty ));\; \varphi (+\infty )=0\}$  if and only if $\Lambda =\infty$. Usually this theorem is stated in the following equivalent form: $\{x^{\lambda _k};\;\; k\geq 1\}$ is dense in $C_0([0,1])=\{\varphi \in C_0([0,1]);\; \varphi (0)=0\}$ if and only if $\Lambda=\infty$.

\smallskip
Let $(\lambda _k)$ be the sequence of the Dirichlet eingenvalues of the Laplacian on a bounded domain of $\mathbb{R}^d$, that we assumed simple. From \cite[Lemma 3.1, page 229]{Ka}, $\lambda _k=O(k^{2/d})$. Therefore, in that case $\Lambda <\infty$ holds if and only if $d=1$.

 %================================
 
 \subsection{The case $\Lambda <\infty$}\label{sb2.1}
 
 The following lemma will be useful in the sequel. Henceforth, 
 \[
 F_a^N(t)=\sum_{k=1}^N a_ke^{-\lambda _kt},\;\; t\geq 0,\;\; N\geq 1.
 \]
 
 \begin{lemma}\label{lemma2.1}
 Let $a\in \ell ^2$. Then $F_a\in L^2((0,T))$ and $F_a^N$ converges to $F_a$ in $L^2((0,T))$ as $N\rightarrow \infty$.
 \end{lemma}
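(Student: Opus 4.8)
The plan is to prove both assertions at once by showing that the linear map $\mathcal{T}\colon a\mapsto F_a$ is bounded from $\ell^2$ into $L^2((0,T))$. Once this is established the lemma follows immediately: the truncations satisfy $F_a^N=\mathcal{T}(a^N)$ with $a^N=(a_1,\dots,a_N,0,\dots)$, and since $a^N\to a$ in $\ell^2$, continuity of $\mathcal{T}$ yields $F_a=\mathcal{T}(a)\in L^2((0,T))$ together with $F_a^N\to F_a$ in $L^2((0,T))$.

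To prove boundedness I would write the squared norm of a truncation as a quadratic form in the coefficients,
\[
\|F_a^N\|_{L^2((0,T))}^2=\sum_{j,k=1}^N a_j\overline{a_k}\,G_{jk},\qquad G_{jk}=\int_0^T e^{-(\lambda_j+\lambda_k)t}\,dt,
\]
and observe that the entries of this Gram matrix are nonnegative and dominated by those of the generalized Hilbert matrix:
\[
0\le G_{jk}=\frac{1-e^{-(\lambda_j+\lambda_k)T}}{\lambda_j+\lambda_k}\le\frac{1}{\lambda_j+\lambda_k}=:H_{jk}.
\]
The key point, and the place where the standing hypothesis $\Lambda=\sum_k 1/\lambda_k<\infty$ enters, is that the row and column sums of $H$ are uniformly controlled by $\Lambda$, since for every fixed $j$,
\[
\sum_{k\ge 1}H_{jk}=\sum_{k\ge 1}\frac{1}{\lambda_j+\lambda_k}\le\sum_{k\ge 1}\frac{1}{\lambda_k}=\Lambda<\infty,
\]
and likewise for the columns by symmetry. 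Schur's test with constant weights then gives $\|H\|_{\ell^2\to\ell^2}\le\Lambda$, and because $0\le G_{jk}\le H_{jk}$ with $G$ symmetric and positive semidefinite, this bound passes to $G$, so that $\langle Ga,a\rangle\le\Lambda\|a\|_{\ell^2}^2$ for every finitely supported $a$. Hence $\|F_a^N\|_{L^2((0,T))}\le\sqrt{\Lambda}\,\|a\|_{\ell^2}$ uniformly in $N$, which is the desired boundedness of $\mathcal{T}$.

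I do not expect a serious obstacle here; the only step requiring a little care is the passage from the entrywise inequality $G_{jk}\le H_{jk}$ to the operator inequality, which relies on the entries being nonnegative, so that $\langle Ga,a\rangle\le\langle G|a|,|a|\rangle\le\langle H|a|,|a|\rangle$ where $|a|=(|a_k|)$, together with the positive semidefiniteness of the Gram matrix $G$. Alternatively, one can bypass the operator formulation and argue directly that $(F_a^N)$ is Cauchy in $L^2((0,T))$: applying the same Schur bound to the tail block yields
\[
\|F_a^N-F_a^M\|_{L^2((0,T))}^2\le\Lambda\sum_{k=M+1}^N|a_k|^2\longrightarrow 0,
\]
so $(F_a^N)$ converges in $L^2((0,T))$, and the limit is identified with $F_a$ since $F_a^N\to F_a$ pointwise on $(0,T)$ (the series being convergent there for any $a\in\ell^2$).
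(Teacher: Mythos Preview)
Your argument is correct, but it takes a more elaborate route than the paper. The paper simply applies Cauchy--Schwarz pointwise to the series,
\[
|F_a(t)|^2\le\Big(\sum_{k\ge1}|a_k|^2\Big)\Big(\sum_{k\ge1}e^{-2\lambda_kt}\Big)=:G(t),
\]
observes that $\int_0^T G(t)\,dt\le\tfrac{\Lambda}{2}\|a\|_{\ell^2}^2$ by integrating term by term, and then invokes dominated convergence (with dominant $G$) together with the pointwise convergence $F_a^N(t)\to F_a(t)$ to get the $L^2$ convergence. Your proof instead realizes $\|F_a^N\|_{L^2}^2$ as a quadratic form with Gram matrix $G_{jk}\le 1/(\lambda_j+\lambda_k)$ and bounds the latter via Schur's test. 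Both approaches use $\Lambda<\infty$ in the same way (to control the sum $\sum_k 1/\lambda_k$), and yours even yields the slightly weaker constant $\sqrt{\Lambda}$ versus the paper's $\sqrt{\Lambda/2}$. The paper's route is shorter and entirely elementary; your route makes the operator-theoretic structure explicit and would generalize more readily to other families than $\{e^{-\lambda_k t}\}$, but for the lemma at hand the Schur-test machinery is more than is needed. One small remark on presentation: your opening sentence treats $\mathcal{T}\colon a\mapsto F_a$ as already defined on all of $\ell^2$ before establishing that $F_a\in L^2$; your actual argument (uniform bound on truncations, Cauchy sequence, identification of the limit via pointwise convergence) avoids this circularity, so this is only a matter of phrasing.
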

 
 \begin{proof}
From Cauchy-Schwarz's inequality 
 \[
\left|F_a^N(t)\right|^2,\; |F_a(t)|^2\le \left(\sum_{k=1}^\infty \left|a_k\right|^2\right)\left(\sum_{k=1}^\infty e^{-2\lambda _kt}dt\right)=G(t),\;\; N\ge 1.
 \]
 But
 \[
 \sum_{k=1}^\infty \int_0^Te^{-2\lambda _kt}dt= \sum_{k=1}^\infty \frac{1-e^{-2\lambda _kT}}{\lambda _k}\le \frac{\Lambda}{2}.
 \]
 Hence $F_a\in L^2((0,T))$ and 
 \[
 \|F_a\|_{L^2((0,T))}\le  \frac{\Lambda}{2}\|a\|_{\ell ^2}.
 \]

Similarly
\[
\left| F_a(t)-F_a^N(t)\right|^2\le \left(\sum_{k\geq 1}e^{-2\lambda _kt}\right)\sum_{k\geq N+1}|a_k|^2,\;\; N\geq 1,\; t>0.
\]
Thus $F_a^N(t)\rightarrow F_a(t)$ as $N\rightarrow \infty$, for any $t\in (0,T]$.  As $\left|F_a^N\right|^2 \le G$, $N\ge 1$, we apply Lebesgue's dominated convergence theorem in order to get that $F_a^N$ converges to $F_a$ in $L^2((0,T))$. 
 \end{proof}
 
 Let $\mathscr{E}$ be the closure in $L^2((0,T))$ of the vector space spanned by $\{e^{-\lambda _kt};\; k\geq 1\}$. By \cite[theorem in page 24]{Sc}, $\mathscr{E}$ is a proper subspace of $L^2((0,T))$. Additionally, $\{e^{-\lambda _kt}\}$ possesses a biorthogonal set $\{\psi _k\}$ in $L^2((0,T))$: 
 \begin{equation}
 \int_0^T\psi _k(t)e^{-\lambda _nt}dt=\delta _{nk}.
 \end{equation}\label{2.3}
We assume that there are constants $\beta >1$, $K>0$ and $\alpha >0$ such that
 \begin{equation}\label{2.1}
 \lambda _n=K(n+\alpha  )^\beta +o(n^{\beta -1}),\;\; n\rightarrow \infty .
 \end{equation}
From in \cite[formula $(3.25)$]{FR}, 
\begin{equation}\label{2.2}
\|\psi _n\|_{L^2((0,T))}\leq Ce^{C\lambda _n^{1/\beta}},\;\; n\ge 1,
\end{equation}
for some constant $C>0$ that can depend only on $(\lambda _n)$.

\smallskip
In light of \eqref{2.3}, we get
\[
a_n=\int_0^T F_a^N(t)\psi_n(t)dt\;\; n\leq N.
\]
Therefore, by Cauchy-Schwarz's inequality, 
\begin{equation}\label{2.3.1}
|a_n| \leq \|F_a^N\|_{L^2((0,T))}\|\psi _n\|_{L^2((0,T))},\;\; n\leq N.%\label{2.3.1}
\end{equation}
By Lemma \ref{lemma2.1}, we can pass to the limit, as $N\rightarrow \infty$, in \eqref{2.3.1}. We get 
\[
|a_n|\leq \|F_a\|_{L^2((0,T))}\|\psi _n\|_{L^2((0,T))},\;\;n\ge 1.
\]
Then, \eqref{2.2} implies
\begin{equation}\label{2.2.1}
|a_n|\leq Ce^{C\lambda _n^{1/\beta}}\|F_a\|_{L^2((0,T))}.
\end{equation}
Hence, for any $N\geq 1$, 
\[
\sum_{n=1}^N|a_n|^2\leq CNe^{C\lambda _N^{1/\beta}}\|F_a\|^2_{L^2((0,T))}.
\]
But $\lambda _N^{1/\beta}=O(N)$ by \eqref{2.1}. Consequently,
\begin{equation}\label{2.4}
\sum_{n=1}^N|a_n|^2\leq e^{CN}\|F_a\|^2_{L^2((0,T))}.
\end{equation}

Let $\theta >0$ and $m>0$ be two given constants. By inequality \eqref{2.4} we have, for any $a\in mB_{h^\theta }$,
 \begin{align*}
 \|a\|_{\ell ^2}^2&=\sum_{k=1}^N|a_k|^2+\sum_{k>N}|a_k|^2
 \\
 &\leq \sum_{k=1}^N|a_k|^2+\frac{1}{\langle N+1\rangle ^{2\theta }}\sum_{k>N} \langle k\rangle ^{2\theta }|a_k|^2
 \\
 &\leq e^{CN}\|F_a\|^2_{L^2((0,T))}+\frac{m^2}{N^{2\theta }}.
 \end{align*}
 That is,
 \begin{equation}\label{2.5}
  \|a\|_{\ell ^2}^2 \leq e^{CN}\|F_a\|^2_{L^2((0,T))}+\frac{m^2}{N^{2\theta }}.
 \end{equation}

We need the following result to pursue our analysis. It is stated as Theorem 5.1 in \cite{BE}.
\begin{theorem}\label{theorem2.1}
Let $0<\tau <1$. There is a constant $c$ depending only on $(\lambda_k)$ (and not on $\rho$, $A$ and the length of $p$) so that 
\[
\|p\|_{L^\infty (0,\rho )}\leq c\|p\|_{L^\infty (A)},
\]
for every $p\in \mbox{span}\{x^{\lambda _k};\; k\geq 1\}$ and every Lebesgue-measurable set $A\subset [\rho ,1]$ of Lebesgue measure at least $\tau$.
\end{theorem}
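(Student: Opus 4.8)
The plan is to recognize this as the Remez--Nikolskii-type inequality for M\"untz systems due to Borwein and Erd\'elyi, and to prove it by reducing an arbitrary measurable comparison set $A$ to an extremal interval and then estimating a single Chebyshev-type extremal function. Throughout I would fix an arbitrary finite M\"untz polynomial $p=\sum_{k=1}^N a_kx^{\lambda_{i_k}}$ and seek a bound \emph{independent of $N$}; this uniformity in the number of terms is precisely the point at which the standing hypothesis $\Lambda=\sum_k 1/\lambda_k<\infty$ of this subsection must be used. Indeed, were $\{x^{\lambda_k}\}$ dense in $C_0([0,1])$ (the case $\Lambda=\infty$), one could construct a $p$ that is large on $(0,\rho)$ while remaining arbitrarily small on $A$, so no finite $c$ could exist; the finiteness of $\Lambda$ is what rules this out and makes the constant quantitative.

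Step 1 is to reduce the measurable set $A$ to an interval. The family $\{x^{\lambda_k}\}$ forms a Chebyshev system on $(0,\infty)$: after dividing by a suitable power and applying Rolle's theorem repeatedly (the generalized Descartes rule of signs), a nonzero element of an $N$-dimensional M\"untz space has at most $N-1$ zeros in $(0,\infty)$. The classical Remez-type reduction for Chebyshev systems then shows that the supremum of the ratio $\|p\|_{L^\infty(0,\rho)}/\|p\|_{L^\infty(A)}$ over all admissible $A\subset[\rho,1]$ with $|A|\ge\tau$ is attained when $A$ is a subinterval of $[\rho,1]$ placed at an endpoint and $p$ is the corresponding equioscillating Chebyshev polynomial. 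Hence it suffices to establish $\|p\|_{L^\infty(0,\rho)}\le c\,\|p\|_{L^\infty(I)}$ for a fixed interval $I\subset[\rho,1]$ of length $\tau$, with $c=c(\Lambda,\tau)$.

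Step 2, the heart of the matter, is a \emph{bounded Chebyshev inequality}: for each $y\in(0,\rho)$ one must bound $|p(y)|$ by $\|p\|_{L^\infty(I)}$ uniformly in $N$, $\rho$ and $y$. Here I would pass to the exponential variable $x=e^{-s}$, which turns each $x^{\lambda_k}$ into $e^{-\lambda_ks}$ and transports the problem to one about exponential sums $\sum a_ke^{-\lambda_ks}$ on $[0,\infty)$, tying it directly to the Dirichlet-series viewpoint of the paper. The extremal (Chebyshev) element of the M\"untz space admits an explicit Blaschke-type product / contour-integral representation in terms of the $\lambda_k$, and the value of this extremal function at a fixed point is dominated by an infinite product over $k$ whose convergence is governed precisely by $\sum_k 1/\lambda_k<\infty$. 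Evaluating that product produces the desired bound with a constant depending only on $(\lambda_k)$ and $\tau$.

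The step I expect to be the genuine obstacle is this uniform Chebyshev/Bernstein estimate in Step 2: the naive bounds deteriorate as $N\to\infty$, and taming them requires the full M\"untz-specific machinery — the analytic representation of the extremal function together with the convergence of the associated product — in which $\Lambda<\infty$ is indispensable. Once the uniform bound is available, taking the supremum over $y\in(0,\rho)$ and combining with the Step 1 reduction yields the inequality for every finite $p$, and a density/limiting argument then extends it to all $p\in\mathrm{span}\{x^{\lambda_k};\,k\ge 1\}$, giving a constant $c$ independent of $\rho$, of $A$, and of the length of $p$, as claimed.
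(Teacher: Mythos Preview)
The paper does not prove this theorem at all: it is quoted verbatim as Theorem~5.1 of Borwein--Erd\'elyi \cite{BE} and used as a black box. So there is no ``paper's own proof'' to compare against; the relevant question is whether your sketch is a faithful outline of the Borwein--Erd\'elyi argument.

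On that score your proposal captures the right architecture. The two-step structure --- a Remez-type reduction from a measurable set to an interval using the Chebyshev-system property of $\{x^{\lambda_k}\}$, followed by a uniform (in the number of terms) pointwise bound on the extremal M\"untz--Chebyshev function --- is exactly how the Borwein--Erd\'elyi proof proceeds, and your observation that $\sum_k 1/\lambda_k<\infty$ is what makes the infinite product controlling the extremal function converge, and hence what makes the constant independent of the length of $p$, is the correct diagnosis of where the hypothesis enters. Your heuristic that density in $C_0([0,1])$ would defeat any such inequality when $\Lambda=\infty$ is also the right intuition.

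Two minor corrections. First, the closing ``density/limiting argument then extends it to all $p\in\mathrm{span}\{x^{\lambda_k}\}$'' is superfluous: $\mathrm{span}$ already denotes finite linear combinations, so once you have the bound for every finite $p$ with a constant independent of $N$, you are done --- there is nothing further to take a limit of. Second, in Step~2 the actual Borwein--Erd\'elyi argument does not literally pass to the exponential variable and analyze Dirichlet series; it works directly with the M\"untz--Chebyshev polynomials on $[0,1]$ and a product/comparison estimate there. Your exponential substitution is a legitimate equivalent viewpoint (and ties nicely to the paper's theme), but it is not how the cited proof is written.
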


\begin{corollary}\label{corollary2.1}
Let $B$ be a Lebesgue-measurable set of $[0,T]$ of positive Lebesgue measure. There is a constant $d$, that can depend on $(\lambda _k)$, $B$ and $T$, so that,  for any $a\in \ell^1$,  
\begin{equation}\label{2.6}
\|F_a\|_{L^\infty ((0,T))}\leq d\|F_a\|_{L^\infty (B)}.
\end{equation}
\end{corollary}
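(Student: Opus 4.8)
The plan is to transfer the estimate to the Müntz setting on $[0,1]$, where Theorem \ref{theorem2.1} is available, through the substitution $x=e^{-t}$. Writing $p(x)=\sum_{k\geq 1}a_kx^{\lambda_k}$ for $x\in[0,1]$, one has $F_a(t)=p(e^{-t})$, and since $a\in\ell^1$ the partial sums $p_N(x)=\sum_{k=1}^Na_kx^{\lambda_k}$ converge to $p$ uniformly on $[0,1]$, because $\|p-p_N\|_{L^\infty(0,1)}\leq\sum_{k>N}|a_k|\to 0$. Under $x=e^{-t}$ the interval $(0,T)$ is carried onto $(r,1)$ with $r:=e^{-T}$, and $B$ onto $\widetilde B:=\{e^{-t};\ t\in B\}\subset[r,1]$, so that \eqref{2.6} is precisely the Müntz inequality $\|p\|_{L^\infty(r,1)}\leq d\,\|p\|_{L^\infty(\widetilde B)}$.

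First I would record two routine reductions. Since $t\mapsto e^{-t}$ is bi-Lipschitz on $[0,T]$ with $|dx/dt|=e^{-t}\in[e^{-T},1]$, the set $\widetilde B$ has positive Lebesgue measure, indeed $|\widetilde B|\geq e^{-T}|B|=:\tau>0$; this is exactly the lower bound on the measure demanded by Theorem \ref{theorem2.1}. Moreover, Theorem \ref{theorem2.1} is stated only for finite combinations $p\in\mathrm{span}\{x^{\lambda_k}\}$, so I would apply it to each $p_N$ and let $N\to\infty$: by the uniform convergence above, $\|p_N\|_{L^\infty(E)}\to\|p\|_{L^\infty(E)}$ for every set $E$, and the inequality passes to the limit, extending it to the whole $\ell^1$-class.

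The step I expect to be delicate is the matching of the intervals. Theorem \ref{theorem2.1} controls $\|p\|_{L^\infty(0,\rho)}$, that is the behaviour of $p$ near the degenerate endpoint $x=0$ (equivalently of $F_a$ for large $t$), by its values on a set $A\subset[\rho,1]$ lying above $\rho$; but the quantity to be bounded, $\|p\|_{L^\infty(r,1)}$, reaches the opposite endpoint $x=1$, which corresponds to the initial time $t=0$. Values of $p$ near $x=1$ cannot be controlled by values on a set lying strictly below $1$: for $p(x)=x^{\lambda_N}$ one has $\|p\|_{L^\infty(r,1)}=1$ while $\|p\|_{L^\infty(\widetilde B)}=(\sup\widetilde B)^{\lambda_N}$, whose ratio tends to $\infty$ as $N\to\infty$ whenever $\sup\widetilde B<1$. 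Thus the estimate forces $\sup\widetilde B=1$, i.e. $B$ must accumulate at $t=0$, and it is here that the portion of $B$ near the initial time has to be used.

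This suggests organising the proof around a splitting at a level $\sigma\in(r,1)$. On $[\sigma,1)$ one wants the bound for free, which happens as soon as $B$ contains a right-neighbourhood $(0,\eta)$ of the origin: then $(e^{-\eta},1)\subset\widetilde B$, and taking $\sigma=e^{-\eta}$ makes $\|p\|_{L^\infty(\sigma,1)}\leq\|p\|_{L^\infty(\widetilde B)}$ trivial. On the complementary piece $(r,\sigma)\subset(0,\sigma)$ Theorem \ref{theorem2.1} applies with its parameter equal to $\sigma$ and with $A=\widetilde B\cap[\sigma,1]$, of measure at least $1-e^{-\eta}>0$, giving $\|p\|_{L^\infty(0,\sigma)}\leq c\,\|p\|_{L^\infty(\widetilde B)}$. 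Combining the two pieces yields \eqref{2.6} with $d=c+1$. For a general measurable $B$ that merely accumulates at $0$ without containing an interval there, the same splitting is the natural route, but controlling the near-$1$ block $[\sigma,1)$ is then the genuine difficulty and appears to require $B$ to be suitably thick at the origin rather than just of positive measure; I would expect that block, and not the application of Theorem \ref{theorem2.1}, to be the technical heart of the argument.
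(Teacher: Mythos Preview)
Your setup coincides exactly with the paper's: set $\rho=e^{-T}$, $A=\{e^{-t}:t\in B\}\subset[\rho,1]$, note $|A|\ge e^{-T}|B|$, apply Theorem~\ref{theorem2.1} to the truncations $F_a^N$, and pass to the limit using the uniform convergence that $a\in\ell^1$ provides. At the key step the paper simply asserts that Theorem~\ref{theorem2.1} yields $\|F_a^N\|_{L^\infty((0,T))}\le d\,\|F_a^N\|_{L^\infty(B)}$ and moves on; it does \emph{not} address the interval mismatch you flagged.

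Your diagnosis of that mismatch is correct, and it points to a genuine gap in the paper rather than in your argument. Theorem~\ref{theorem2.1}, as stated, controls $\|p\|_{L^\infty(0,\rho)}$, which under $x=e^{-t}$ is $\|F_a\|_{L^\infty((T,\infty))}$, not $\|F_a\|_{L^\infty((0,T))}$. Your counterexample $a=e_N$, $F_a(t)=e^{-\lambda_N t}$, is decisive: if $\operatorname{ess\,inf}B=\delta>0$ then
\[
\frac{\|F_a\|_{L^\infty((0,T))}}{\|F_a\|_{L^\infty(B)}}\ \ge\ e^{\lambda_N\delta}\ \longrightarrow\ \infty,
\]
so no constant $d$ uniform over $a\in\ell^1$ can exist, and the corollary is false for general measurable $B$ of positive measure. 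Some hypothesis forcing $B$ to reach $t=0$ is needed; your repair (assume $B\supset(0,\eta)$, split at $\sigma=e^{-\eta}$, handle $[\sigma,1)$ trivially and $(0,\sigma)$ via Theorem~\ref{theorem2.1}) is the natural one and is complete under that extra assumption. In short, you reproduced the paper's route and, by being more careful at the one delicate step, uncovered an oversight in the paper itself.
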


\begin{proof}
We proceed similarly as in the beginning of the proof of  \cite[Corollary 5.2]{MRT}. Let $\rho =e^{-T}$ and 
\[
A=\{ x=e^{-t};\; t\in B\}\subset [\rho ,1].
\]
Then
\[
|A|=\int_Be^{-t}dt\geq e^{-T}|B|.
\]
We recall that
\[
F_a^N(t)=\sum_{k=1}^Na_ke^{-\lambda _kt},\;\; t\geq 0,\; N\geq 1.
\]
We get by applying Theorem \ref{theorem2.1} 
\begin{equation}\label{2.6.1}
\|F_a^N\|_{L^\infty ((0,T))}\leq d\|F_a^N\|_{L^\infty (B)},\;\; \textrm{for any}\; N\geq 1.
\end{equation}
On the other hand, $F_a^N$ converges uniformly to $F_a$ in $[0,+\infty )$. This is an immediate consequence of the following estimate
\[
\left| F_a(t)-F_a^N(t)\right|\leq \sum_{k\geq N+1}\left| a_k\right|,\;\; N\geq 1,\; t\geq 0.
\]
Therefore, \eqref{2.6} is obtained by passing to the limit, as $N\rightarrow \infty$, in \eqref{2.6.1}.
\end{proof}

Now, estimate \eqref{2.6} in \eqref{2.5} yields, where $B$ is a given Lebesgue-measurable set of $[0,T]$ of positive Lebesgue measure,
 \begin{align}
 \|a\|_{\ell ^2}^2 &\le e^{CN}\|F_a\|^2_{L^\infty (B)}+\frac{m^2}{N^{2\theta }},\;\; a\in mB_{h^\theta}\cap \ell ^1 \label{2.7}
 \\
 &\le \max (1,m^2)\left(  e^{CN}\|F_a\|^2_{L^\infty (B)}+\frac{1}{N^{2\theta }}\right).\nonumber
 \end{align}
 
 Let $\widetilde{N}$ be the greatest integer satisfying
 \[
 e^{C\widetilde{N}}\|F_a\|^2_{L^\infty (B)}\leq \frac{1}{\widetilde{N}^{2\theta }}.
 \]
 Such an $\widetilde{N}$ exists provided that $\|F_a\|_{L^\infty (B)}$ is sufficiently small. A straightforward computation shows that 
 \[
 \widetilde{N}>C\left|\ln \|F_a\|_{L^\infty (B)} \right|,
 \]
By taking $N=\widetilde{N}$ in \eqref{2.7}, we get that there exist $\delta >0$ so that
 \begin{equation}\label{2.8}
 \|a\|_{\ell ^2} \leq C\left|\ln \|F_a\|_{L^\infty (B)} \right|^{-\theta },\;\; \textrm{if}\;\;  \|F_a\|_{L^\infty (B)}\leq \delta.
 \end{equation}
 
 When $\|F_a\|_{L^\infty (B)}>\delta$, 
 \begin{equation}\label{2.8.1}
 \|a\|_{\ell ^2}\leq \frac{m}{\delta} \|F_a\|_{L^\infty (B)}.
 \end{equation}
 
 A combination of \eqref{2.8} and \eqref{2.8.1} implies
\[
 \|a\|_{\ell ^2} \leq C\left\{ \left|\ln \|F_a\|_{L^\infty (B)} \right|^{-\theta }+\|F_a\|_{L^\infty (B)}\right\}.
\]

\smallskip
We sum up our analysis in the following theorem.

\begin{theorem}\label{theorem2.2}
We assume that assumption \eqref{2.1} is satisfied. Let $B$ a Lebesgue-measurable set of $[0,T]$ of positive Lebesgue measure, $m>0$ and $\theta  >0$. There exists a constant $C>0$, that can depend on $B$, $(\lambda _n)$, $m$ and $\theta$, so that, for any $a\in mB_{h^\theta }\cap \ell ^1$,
\begin{equation}\label{2.14}
\|a\|_{\ell ^2} \leq C\left\{ \left|\ln \|F_a\|_{L^\infty (B)} \right|^{-\theta }+\|F_a\|_{L^\infty (B)}\right\}.
 \end{equation}
\end{theorem}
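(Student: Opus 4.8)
The plan is to split $\|a\|_{\ell^2}^2$ into a head $\sum_{k=1}^N|a_k|^2$ and a tail $\sum_{k>N}|a_k|^2$, bound each term separately, and then optimize over the truncation level $N$. The tail is immediate from membership in $mB_{h^\theta}$: since $\langle k\rangle^{2\theta}$ is increasing, $\sum_{k>N}|a_k|^2\le \langle N+1\rangle^{-2\theta}\sum_{k>N}\langle k\rangle^{2\theta}|a_k|^2\le m^2/N^{2\theta}$. The head is the substantive part, and I would control it through the biorthogonal family $\{\psi_k\}$ of $\{e^{-\lambda_k t}\}$ in $L^2((0,T))$, whose existence is precisely what $\Lambda<\infty$ buys us.

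For the head, I would first use biorthogonality together with Lemma \ref{lemma2.1} (which guarantees $F_a^N\to F_a$ in $L^2((0,T))$) to pass to the limit and obtain $|a_n|\le \|F_a\|_{L^2((0,T))}\|\psi_n\|_{L^2((0,T))}$ by Cauchy--Schwarz. Inserting the exponential bound $\|\psi_n\|_{L^2((0,T))}\le Ce^{C\lambda_n^{1/\beta}}$ from \eqref{2.2} and invoking the asymptotics \eqref{2.1} to replace $\lambda_n^{1/\beta}$ by $O(n)$, I would arrive at $\sum_{n=1}^N|a_n|^2\le e^{CN}\|F_a\|_{L^2((0,T))}^2$. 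The point of assumption \eqref{2.1} with $\beta>1$ is exactly to provide the clean growth $\lambda_N^{1/\beta}=O(N)$ needed to absorb the spurious factor $N$ into the exponential without changing its form.

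Next I would upgrade the $L^2((0,T))$ norm of $F_a$ to the $L^\infty(B)$ norm over the measurement set $B$. This is the role of Corollary \ref{corollary2.1}: since $\|F_a\|_{L^2((0,T))}\le \sqrt{T}\,\|F_a\|_{L^\infty((0,T))}$ and \eqref{2.6} gives $\|F_a\|_{L^\infty((0,T))}\le d\|F_a\|_{L^\infty(B)}$, the head bound becomes $\sum_{n=1}^N|a_n|^2\le e^{CN}\|F_a\|_{L^\infty(B)}^2$. Combining head and tail yields the key inequality $\|a\|_{\ell^2}^2\le e^{CN}\|F_a\|_{L^\infty(B)}^2+m^2N^{-2\theta}$, valid for every $N\ge 1$ and every $a\in mB_{h^\theta}\cap\ell^1$.

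The final step is the optimization of $N$, which I expect to be the only delicate point. When $\|F_a\|_{L^\infty(B)}$ is small, I would take $N=\widetilde N$ to be the largest integer with $e^{C\widetilde N}\|F_a\|_{L^\infty(B)}^2\le \widetilde N^{-2\theta}$; resolving this transcendental balance shows $\widetilde N> C|\ln\|F_a\|_{L^\infty(B)}|$, so both terms are controlled by $|\ln\|F_a\|_{L^\infty(B)}|^{-2\theta}$, giving the logarithmic estimate \eqref{2.8}. Such an $\widetilde N\ge 1$ exists only once $\|F_a\|_{L^\infty(B)}$ drops below some threshold $\delta$; for $\|F_a\|_{L^\infty(B)}>\delta$ I would instead use the crude bound $\|a\|_{\ell^2}\le m\le (m/\delta)\|F_a\|_{L^\infty(B)}$ coming directly from $a\in mB_{h^\theta}$. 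Adding the two regimes produces \eqref{2.14}. The main subtlety is purely in the balancing argument: one must verify that the largest admissible $\widetilde N$ indeed grows like $|\ln\|F_a\|_{L^\infty(B)}|$ and that all suppressed constants depend only on $B$, $(\lambda_n)$, $m$ and $\theta$, as claimed.
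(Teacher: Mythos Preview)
Your proposal is correct and follows essentially the same approach as the paper: biorthogonal coefficients plus the Fattorini--Russell bound \eqref{2.2} for the head, the $h^\theta$ assumption for the tail, Corollary~\ref{corollary2.1} to pass from $L^2((0,T))$ to $L^\infty(B)$, and the same balancing choice of $\widetilde N$ to conclude. The only cosmetic difference is the order in which you apply Corollary~\ref{corollary2.1} and assemble the head--tail estimate, which is immaterial.
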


We observe that $h^\theta \subset \ell ^1$ when $\theta >1/2$. Therefore, $mB_{h^\theta}\cap \ell ^1=mB_{h^\theta}$ if $\theta >1/2$.

\begin{remark}\label{remark2.1}
1) In light of \eqref{2.1} and \eqref{2.2.1}, we have the following Lipschitz stability estimate, where $c$ is a constant depending only on $(\lambda _n)$, \[ \sum_{n\geq 1}e^{-cn}|a_n|^2\leq C \|F_a\|_{L^2((0,T))},\;\; a\in \ell ^2.\] Here the left hand side of this inequality is seen as a an $\ell^2$-weighted norm of $a$.

\smallskip
2) Starting from \eqref{2.5}, we can prove the following estimate 
\begin{equation}\label{2.14.1}
\|a\|_{\ell ^2} \leq C\left\{ \left|\ln \|F_a\|_{L^2((0,T))} \right|^{-\theta }+\|F_a\|_{L^2((0,T))}\right\},\;\; a\in mB_{h^\theta},
\end{equation}
for any $\theta >0$.

\smallskip
3) It is possible to establish a H\"older stability estimate. This is can be done by substituting in Theorem \ref{theorem2.2} $h^\theta$ by the following subspace
\[
h_{c, \gamma}=\left\{b=(b_n);\; \sum_{n\geq 1}e^{cn^\gamma}|b_n|^2<\infty\right\},
\]
with $c>0$ and $\gamma >1$. A  proof of a similar result will be detailed in the next subsection.

\smallskip
4) A Lipschitz or a H\"older stability estimate is not true in general. Indeed, let us assume that we have an estimate of the form, where $0<\mu \leq 1$,
\begin{equation}\label{2.15.1}
\| a\|_{\ell ^2}\leq C\left( \|F_a\|_{L^2((0,T))}^\mu +\|F_a\|_{L^2((0,T))}\right),\;\; a\in B_{h^\theta} .
\end{equation}
Let $(e_k)$ be the usual orthonormal basis of $\ell ^2$. That is $e_k=(\delta _{kn})$, where $\delta_{kn}$ is the Kronecker symbol. Letting $f_k=\langle k\rangle ^{-\theta} e_k$, we get by a straightforward computation 
\begin{equation}\label{2.15.2}
\| F_{f_k}\|_{L^2((0,T))}\le \frac{1}{\langle k\rangle ^\theta}\frac{1}{\sqrt{2\lambda _k}},\;\; k\geq 1.
\end{equation}
Since $f_k\in B_{h^\theta}$, if \eqref{2.15.1} is true then we would have from \eqref{2.15.2}
\begin{equation}\label{2.15.3}
\frac{1}{\langle k\rangle ^\theta}\leq C\left( \frac{1}{\langle k\rangle ^{\theta \mu}\lambda _k^{\mu /2}}+\frac{1}{\langle k\rangle ^\theta\lambda _k^{1/2}}\right),\;\;  k\geq k_0,
\end{equation}
The particular choice of $\lambda _k=k^\beta$, $k\geq 1$ in \eqref{2.15.1} yields
\[
1\leq C\left( \frac{1}{k^{\mu \beta /2 -\theta (1-\mu)}}+\frac{1}{k^{\beta/2}}\right),\;\;  k\geq 1,
\]
But this inequality cannot be true if $\mu \beta /2 -\theta (1-\mu)>0$.
\end{remark}

%===============================

\subsection{The case $\Lambda =\infty$}\label{sb2.2}

We pick $a=(a_k)\in B_{\ell^1}$ and we set 
\[
\varrho =\|F_a\|_\infty \; (\leq \|a\|_{\ell ^1}\leq 1).
\]
Since $\varrho \geq |F_a(s)|\geq |a_1|e^{-\lambda _1s}-e^{-\lambda _2s}$, we have
\[
|a_1|\leq \varrho e^{\lambda _1s}+e^{-(\lambda _2-\lambda _1)s},\; \mbox{for any}\; s\geq 0.
\]
The choice of $s=\frac{1}{\lambda _2}\ln(1/\varrho )$ gives
\[
|a_1|\leq 2\varrho^{1-\frac{\lambda _1}{\lambda _2}}.
\]
More generally, we have
\[
|a_k|\leq (\varrho +|a_1|+\ldots +|a_{k-1}|)e^{-\lambda _ks}+ e^{-\lambda_{k+1}s}
\]
and then
\[
|a_k|\leq (\varrho +|a_1|+\ldots +|a_{k-1}|)^{1-\frac{\lambda _k}{\lambda _{k+1}}}.
\]
So an induction argument leads to the following estimate
\begin{equation}\label{2.9}
|a_1|+\ldots +|a_k|\leq C_k\varrho^{(1-\frac{\lambda _1}{\lambda _2})\ldots (1-\frac{\lambda _k}{\lambda _{k+1}})},
\end{equation}
with $C_1=2$ and $C_{k+1}=3C_k+2$. Therefore $C_k=2\sum_{i=1}^{k-1} 3^i\leq 3^k$, $k\geq 2$.

\smallskip
If 
\[
p_k=\prod_{i=1}^k\left( 1-\frac{\lambda _i}{\lambda _{i+1}}\right),
\]
then \eqref{2.9} implies
\begin{equation}\label{2.10}
\sum_{i=1}^k|a_i| \leq 3^k\varrho ^{p_k}.
\end{equation}

We introduce the weighted $\ell ^1$ space, where  $\theta  >0$, 
\[
\ell ^{1,\theta } =\left\{ a=(a_i);\; \sum_{i\geq 1}i^\theta  |a_i|<\infty \right\}.
\]
We equip $\ell ^{1,\theta }$ with its natural norm
\[
\| a\|_{\ell ^{1,\theta }}=\sum_{i\geq 1}i^\theta  |a_i|.
\]

Let $a\in B_{\ell ^{1,\theta }}$. In light of \eqref{2.10}, we have
\begin{align*}
\|a\|_{\ell ^1}&=\sum_{i=1}^k|a_i|+\sum_{i\geq k+1}|a_i|
\\
&\leq 3^k\varrho ^{p_k}+\frac{1}{k^\theta }\sum_{i\geq k+1}i^\theta  |a_i|.
\end{align*}
Hence
\begin{equation}\label{2.11}
\|a\|_{\ell ^1}\leq 3^k\varrho ^{p_k}+\frac{1}{k^\theta },\;\; k\geq 1.
\end{equation}

Let us assume that the sequence $(\lambda _k)$ obeys to the following assumptions: there are four constants $\beta _0\geq 0$, $\beta _1 >0$, $c>0$ and $d>0$ so that
\begin{equation}\label{2.12}
\lambda _{i+1}-\lambda _i\geq \frac{d}{(i+1)^{\beta_0}}\;\; \mbox{and}\;\;  \lambda _i\leq ci^{\beta _1} \;\; i\geq 1.
\end{equation}
Under these assumptions, 
\[
p_k\geq q_k=\frac{c_\ast^k}{(k+1)^{\beta k}},\;\; \mbox{with}\; \beta= \beta_0+\beta _1 \; \mbox{and}\; c_\ast =\min (d/c,1).
\]
Therefore, \eqref{2.11} yields
\begin{equation}\label{3.13}
\|a\|_{\ell ^1}\leq 3^k\varrho ^{q_k}+\frac{1}{k^\theta },\;\; k\geq 1.
\end{equation}
If $\varrho$ is sufficiently small, we denote by $\widetilde{k}$ the greatest positive integer such that 
\[ 
3^{\widetilde{k}}\varrho ^{q_{\widetilde{k}}}\leq  \frac{1}{\widetilde{k}^\theta }.
\]

Let $c_\star =(\ln 3+\theta+\beta )^{1/2}$. Since
\[ 
3^{\widetilde{k}+1}\varrho ^{q_{\widetilde{k}+1}} >\frac{1}{(\widetilde{k}+1)^\theta },
\]
we have
\[
\widetilde{k}>\frac{1}{2c_\star}\left( \ln |\ln \varrho | \right)^{1/2}
\]
by a straightforward computation.

\smallskip
We end up getting
\begin{equation}\label{2.12.1}
\|a\|_{\ell ^1}\le \left(\frac{1}{2c_\star}\right)^\theta  \left( \ln |\ln \|F_a\|_\infty | \right)^{-\theta  /2},\;\; \|F_a\|_\infty=\varrho \leq \varrho _0,
\end{equation}
for some $\varrho _0>0$.

\smallskip
When $\|F_a\|_\infty \ge \varrho _0$, we have
\begin{equation}\label{2.12.2}
\|a\|_{\ell ^1}\leq \|a\|_{\ell ^{1,\theta}}\leq 1\leq \frac{\|F_a\|_\infty}{\varrho _0}
\end{equation}

\smallskip
Hereafter we used that $F_{\lambda a}=\lambda F_a$, $\lambda \in \mathbb{C}$, which a consequence of the linearity of $\mathcal{U}$.  In light of \eqref{2.12.1} and \eqref{2.12.2}, we can state the following result.

\begin{theorem}\label{theorem2.4}
We assume that assumption \eqref{2.12} fulfills. Let $m>0$. There exists a constant $C>0$, that can depend only on $(\lambda_n)$ and $m$, so that, for any $a\in mB_{\ell^{1,\theta }}$,
\[
\|a\|_{\ell ^1}\leq C\left(\left| \ln  |\ln (m^{-1}\|F_a\|_\infty )|\right|^{-\theta  /2}+\|F_a\|_\infty\right).
\]
\end{theorem}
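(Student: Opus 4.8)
The plan is to deduce the general statement from the unit-ball estimates already obtained in the preceding discussion, using the homogeneity of $\mathcal{U}$. Concretely, given $a\in mB_{\ell^{1,\theta}}$, I would normalize by setting $b=m^{-1}a$, so that $b\in B_{\ell^{1,\theta}}$; the identity $F_{\lambda a}=\lambda F_a$ then gives $F_b=m^{-1}F_a$, hence $\|F_b\|_\infty=m^{-1}\|F_a\|_\infty$ and $\|b\|_{\ell^1}=m^{-1}\|a\|_{\ell^1}$. All that remains is to feed $b$ into \eqref{2.12.1} and \eqref{2.12.2} and unwind the normalization.

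First, in the small-data regime $\|F_b\|_\infty=m^{-1}\|F_a\|_\infty\le\varrho_0$, estimate \eqref{2.12.1} applied to $b$ reads
\[
m^{-1}\|a\|_{\ell^1}\le\left(\frac{1}{2c_\star}\right)^\theta\left(\ln|\ln(m^{-1}\|F_a\|_\infty)|\right)^{-\theta/2},
\]
so that $\|a\|_{\ell^1}\le m(2c_\star)^{-\theta}\left|\ln|\ln(m^{-1}\|F_a\|_\infty)|\right|^{-\theta/2}$, which is exactly the logarithmic term of the assertion (the outer absolute value being immaterial here, since $\ln|\ln\varrho|>0$ for $\varrho$ small). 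In the complementary regime $m^{-1}\|F_a\|_\infty>\varrho_0$, estimate \eqref{2.12.2} applied to $b$ gives $m^{-1}\|a\|_{\ell^1}\le\varrho_0^{-1}m^{-1}\|F_a\|_\infty$, i.e. $\|a\|_{\ell^1}\le\varrho_0^{-1}\|F_a\|_\infty$, which is the linear term. Since the two regimes are mutually exclusive and both bounds are nonnegative, choosing $C=\max\left(m(2c_\star)^{-\theta},\varrho_0^{-1}\right)$ and bounding the active term by $C$ times the full sum yields the claimed inequality.

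I do not expect any genuine obstacle in this final step: it is a pure dilation argument, and the entire analytic weight of the result sits in \eqref{2.12.1}, which itself rests on the induction \eqref{2.9}, the gap-and-growth hypotheses \eqref{2.12}, and the optimization over $k$ producing the double-logarithmic rate. The one point deserving a moment's attention is the bookkeeping of constants: the threshold $\varrho_0$ and the quantity $c_\star=(\ln 3+\theta+\beta)^{1/2}$ depend only on $(\lambda_n)$ (through $\beta=\beta_0+\beta_1$) and on $\theta$, and these dependencies are preserved under the scaling, so that the resulting $C$ depends only on $(\lambda_n)$, $\theta$ and $m$, as required.
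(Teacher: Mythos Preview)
Your proposal is correct and follows exactly the paper's own route: the paper states that ``hereafter we used that $F_{\lambda a}=\lambda F_a$'' and then deduces Theorem~\ref{theorem2.4} directly from \eqref{2.12.1} and \eqref{2.12.2}, which is precisely the dilation argument you spell out. Your bookkeeping of the two regimes and of the constant $C$ is accurate; there is nothing to add.
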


We are now going to show that, even in the present case ($\Lambda =\infty$), it is possible to establish a H\"older stability estimate.

\smallskip
We pick $a\in \ell ^1$, $N$ an non negative integer and we recall that $F_a^N$ is given by
\[
F_a^N(s)=\sum_{n=1}^{N}e^{-\lambda _ns}a_n,\;\; s\geq 0.
\]
Let $x_n=e^{-\lambda _n}$, $n=1,\ldots N$. We introduce the following Vandermonde matrix
\[
V_N=
\begin{pmatrix}
1&\ldots&1\\ x_1&\ldots&x_N\\ \vdots&\ldots&\vdots \\x_1^{N-1}&\ldots&x_N^{N-1}
\end{pmatrix}.
\]
By setting $A_N=(a_1,\ldots ,a_N)^t$ and $B_N=(F_a^N(0),\ldots ,F_a^N(N-1))^t$, we get in a straightforward manner that $V_NA_N=B_N$ or equivalently $A_N=V_N^{-1}B_N$.

\smallskip
If $V_N^{-1}= (w_{ij})$ and $\|V_N^{-1}\|=\sum_{1\leq i,j\leq N}|w_{ij}|$, then
\begin{equation}\label{2.19}
\|A_N\|_1\leq \|V_N^{-1}\|\|B_N\|_\infty .
\end{equation}

From the proof of \cite[Theorem 1]{Ga}, we obtain
\[
\|V_N^{-1}\|\leq \sum_{1\leq j\leq N}\prod_{i\neq j}\frac{1+|x_j|}{|x_i-x_j|}.
\]
Therefore, under assumption \eqref{2.12}, we get after some technical calculations 
\[
\|V_N^{-1}\|\leq Ce^{CN^{\beta _1}},
\]
where $\beta _1$ is the same as in \eqref{2.12}. Hence, \eqref{2.19} entails
\begin{equation}\label{2.20}
\sum_{n=1}^N|a_i|\leq Ce^{CN^{\beta _1}}\|F_N\|_\infty \leq Ce^{CN^{\beta _1}}\left(\|F_a\|_\infty+\sum_{n>N}|a_n|\right).
\end{equation}

For $\alpha >0$ and $\beta >0$, we introduce the following weighted $\ell ^1$-space:
\[
\ell ^1_{\alpha ,\beta }=\left\{ a=(a_i);\; \sum_{n\geq 1}e^{\alpha n^\beta}|a_i|<\infty \right\}.
\]
We equip this space with its natural norm
\[
\|u\|_{\ell ^1_{\alpha ,\beta}}=\sum_{n\geq 1}e^{\alpha n^\beta}|a_i|.
\]

Let $m$ be a non negative constant and $\beta >\beta _1$ ($\beta _1$ is the same as in \eqref{2.12}). Assuming that $a\in mB_{\ell ^1_{\alpha ,\beta}}$, we obtain in light of \eqref{2.20}
\[
\|a\|_{\ell ^1}\leq Ce^{CN^{\beta _1}}\left(\|F_a\|_\infty+me^{-cN^\beta}\right)+me^{-cN^\beta}
\]
Therefore, we find an integer $N_0$ so that for any $N\geq N_0$,
\[
\|a\|_{\ell ^1}\leq C\left(e^{CN^{\beta _1}}\|F_a\|_\infty+e^{-\widetilde{C}N^\beta}\right).
\]
We derive by minimizing with respect to $N$ the following H\"older stability estimate.
\begin{theorem}\label{theorem2.5}
We assume that  \eqref{2.12} fulfills. Let $m>0$, $\alpha >0$ and $\beta >\beta _1$. There exist two constants $C>0$ and $\gamma >0$, that can depend only on $(\lambda _n)$, $m$, $\alpha $ and $\beta$, so that, for any $a\in mB_{\ell^1_{\alpha ,\beta}}$,
\[
\|a\|_{\ell ^1}\leq C\left(\|F_a\|_\infty ^\gamma +\|F_a\|_\infty\right).
\]
\end{theorem}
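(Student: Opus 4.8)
The plan is to complete the minimization already set up just before the statement. The starting point is the inequality \eqref{2.20}, which bounds the partial sum $\sum_{n=1}^N|a_n|$ by $Ce^{CN^{\beta_1}}$ times $\|F_a\|_\infty$ plus the tail $\sum_{n>N}|a_n|$. First I would exploit the membership $a\in mB_{\ell^1_{\alpha,\beta}}$ to control the tail: since $\sum_{n\geq 1}e^{\alpha n^\beta}|a_n|\leq m$, factoring out the smallest weight on the range $n>N$ gives $\sum_{n>N}|a_n|\leq e^{-\alpha(N+1)^\beta}\sum_{n>N}e^{\alpha n^\beta}|a_n|\leq me^{-\alpha N^\beta}$, so the tail decays like $me^{-cN^\beta}$ with $c=\alpha$. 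Inserting this into \eqref{2.20} and adding the tail to both sides produces exactly the displayed inequality $\|a\|_{\ell^1}\leq Ce^{CN^{\beta_1}}(\|F_a\|_\infty+me^{-cN^\beta})+me^{-cN^\beta}$.

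Next I would collapse the cross term. Because $\beta>\beta_1$, the product $e^{CN^{\beta_1}}e^{-cN^\beta}=e^{CN^{\beta_1}-cN^\beta}$ is eventually dominated by its decaying factor: there is $N_0$ such that $cN^\beta-CN^{\beta_1}\geq \tfrac{c}{2}N^\beta$ for all $N\geq N_0$, since $N^{\beta_1}/N^\beta\to0$. The standalone tail $me^{-cN^\beta}$ is of course also $\leq me^{-\frac{c}{2}N^\beta}$, so for $N\geq N_0$ the inequality reduces to $\|a\|_{\ell^1}\leq C(e^{CN^{\beta_1}}\|F_a\|_\infty+e^{-\widetilde{C}N^\beta})$ with $\widetilde{C}=c/2$.

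The heart of the argument is then the optimization in $N$. Writing $\varrho=\|F_a\|_\infty$, I would choose $N$ to be the integer part of $\left(\tfrac{1}{2C}\ln(1/\varrho)\right)^{1/\beta_1}$, so that $e^{CN^{\beta_1}}$ is comparable to $\varrho^{-1/2}$ and the first term is of order $\varrho^{1/2}$. For the second term, this choice gives $N^\beta$ of order $\left(\ln(1/\varrho)\right)^{\beta/\beta_1}$; since $\beta/\beta_1>1$, this grows strictly faster than $\ln(1/\varrho)$, so $e^{-\widetilde{C}N^\beta}$ decays faster than any power of $\varrho$ and in particular is $\leq \varrho^{1/2}$ once $\varrho$ is small. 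As $\varrho\to0$ we have $N\to\infty$, so the constraint $N\geq N_0$ is met for $\varrho\leq\varrho_0$ with $\varrho_0$ small; this yields $\|a\|_{\ell^1}\leq C\varrho^{1/2}$ for $\varrho\leq\varrho_0$. For the remaining range $\varrho>\varrho_0$ I would use the crude bound $\|a\|_{\ell^1}\leq\|a\|_{\ell^1_{\alpha,\beta}}\leq m\leq \tfrac{m}{\varrho_0}\|F_a\|_\infty$, exactly as in \eqref{2.12.2}. Combining the two regimes gives the asserted estimate with $\gamma=1/2$.

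The main obstacle, and the reason this case delivers a genuine H\"older rate rather than the merely double-logarithmic rate of Theorem \ref{theorem2.4}, is the tension between the exponentially growing factor $e^{CN^{\beta_1}}$ coming from the inverse Vandermonde bound and the exponentially decaying weight tail $e^{-cN^\beta}$. The strict inequality $\beta>\beta_1$ is precisely what lets the decay outrun the growth, making the optimal $N$ land on a power of $\varrho$. The rounding of $N$ to an integer and the verification $N\geq N_0$ for small $\varrho$ are routine and absorbed into the constants; note also that $\gamma$ is not canonical, since choosing $e^{CN^{\beta_1}}\approx\varrho^{-(1-\gamma)}$ realizes any $\gamma\in(0,1)$, and I fix $\gamma=1/2$ for concreteness.
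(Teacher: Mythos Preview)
Your proposal is correct and follows exactly the route taken in the paper: the tail control via $a\in mB_{\ell^1_{\alpha,\beta}}$, the absorption of the cross term using $\beta>\beta_1$, and the minimization in $N$ are all the steps the paper carries out, and you have simply made the last step explicit by exhibiting the choice of $N$ and the resulting exponent $\gamma=1/2$. The handling of the regime $\varrho>\varrho_0$ is likewise identical to the device used in \eqref{2.12.2}.
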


%==========================================================

\section{Determining the initial heat distribution in one the dimensional heat equation}\label{s3}

\subsection{Point measurement}\label{sb3.1}

We come back to the one dimensional heat equation. We consider again the IBVP 
\begin{equation}\label{3.1}
\left\{
\begin{array}{lll}
(\partial _t-\partial _x^2)u=0\;\; \mbox{in}\; (0,\pi )\times (0,+\infty ),
\\
u(0,\cdot )=u(\pi ,\cdot )=0,
\\
u(\cdot ,0)=f.
\end{array}
\right.
\end{equation}
The solution of the IBVP \eqref{3.1} is given by
\begin{equation}\label{3.2}
u_f(x,t)=\frac{2}{\pi}\sum_{k\geq 1}\widehat{f}_ke^{-k^2t}\sin (kx),
\end{equation}
where $\widehat{f}_k$ is the Fourier coefficient of $f\in L^2((0,\pi))$:
\[
\widehat{f}_k=\frac{2}{\pi}\int_0^\pi f(x)\sin (kx)dx.
\]

From \cite[Lemma 1.1.4, page 30]{AN}, there exists $x_0\in (0,\pi )$ satisfying
\begin{equation}\label{3.3}
|\sin (kx_0)|\geq d_0k^{-1},\;\; k\geq 1,
\end{equation}
where $d_0$ a constant depending on $x_0$.

\smallskip
Let $\theta \ge 0$. It is known that the Sobolev space $H^\theta ((0,\pi ))$ can be constructed by using Fourier series. Precisely, we have
\[
H^\theta ((0,\pi ) )=\{h\in L^2((0,\pi ));\; \sum_{k\geq 1}\langle k\rangle ^{2\theta }|\widehat{h}_k|^2<\infty \}.
\]
$H^\theta ((0,\pi ))$ is equipped with its natural norm
\[
\|h\|_{H^\theta ((0,\pi ))}=\left( \sum_{k\geq 1}\langle k\rangle ^{2\theta }|\widehat{h}_k|^2\right)^{1/2}.
\]

We take $f\in H^2((0,\pi ))$  and we set
\[
a_k=\frac{2}{\pi}\sin kx_0\widehat{f}_k,\;\; k\geq 1.
\]
In light of \eqref{3.3}, we get
\begin{equation}\label{3.4}
\frac{\pi}{2} |a_k|\leq |\widehat{f}_k|\leq  c_0k|a_k|.
\end{equation}
Here $c_0$ is a constant depending on $x_0$. Therefore
\[
\sum_{k\geq 1} |\widehat{f}_k|^2\leq c_0^2\sum_{k\geq 1}\langle k\rangle ^2 |a_k|^2 .
\]
Hence
\begin{equation}\label{3.5}
\sum_{k\geq 1} |\widehat{f}_k|^2\leq c_0^2\left( \sum_{k\geq 1}\langle k\rangle ^4|a_k|^2\right)^{1/2}\left( \sum_{k\geq 1}|a_k|^2\right)^{1/2}
\end{equation}
by Cauchy-Schwarz's inequality.

\smallskip
But
\[
\sum_{k\geq 1}\langle k\rangle ^4|a_k|^2\leq \frac{4}{\pi ^2}\sum_{k\geq 1}\langle k\rangle ^4|\widehat{f}_k|^2=\frac{4}{\pi ^2}\|f\|_{H^2((0,\pi))}^2.
\]
This estimate in \eqref{3.5} gives
\[
\|f\|_{L^2((0,\pi ))}\leq \widetilde{c}_0\|f\|_{H^2((0,\pi))}^{1/2}\|a\|_{\ell ^2}^{1/2}.
\]
Here $\widetilde{c}_0=\sqrt{2}c_0/\sqrt{\pi}$.

\smallskip
Then a consequence of Theorem \ref{theorem2.2} is
\begin{theorem}\label{theorem3.1}
Let $B$ a measurable set of $[0,T]$ of positive Lebesgue measure and $m>0$. There exists a constant $C>0$, that can depend only on $B$, $x_0$ and $m$, so that, for any $f\in mB_{H^2((0,\pi ))}$,
\[
 \|f\|_{L^2((0,\pi ))} \leq C\left( \left|\ln \|u_f (x_0,\cdot )\|_{L^\infty (B)}\right|^{-1}+\|u_f(x_0,\cdot )\|_{L^\infty (B)}\right).
 \]
\end{theorem}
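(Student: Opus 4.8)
The plan is to identify the point measurement with a Dirichlet series and reduce to Theorem \ref{theorem2.2}. By \eqref{3.2} and the definition $a_k=\frac{2}{\pi}\sin(kx_0)\widehat{f}_k$, one has $u_f(x_0,t)=\sum_{k\geq 1}a_ke^{-k^2t}=F_a(t)$ for the particular sequence $\lambda_k=k^2$. This sequence satisfies \eqref{2.1} trivially: take $K=1$, $\alpha=0$ and $\beta=2>1$, the remainder being identically zero; moreover $\sum 1/k^2<\infty$, so we are in the regime $\Lambda<\infty$ and Theorem \ref{theorem2.2} is available, which I would invoke with the exponent $\theta=2$.

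First I would establish the membership of $a$ in a ball of $h^2$. Since $|\sin(kx_0)|\leq 1$, the computation already carried out above gives $\sum_{k\geq 1}\langle k\rangle^4|a_k|^2\leq \frac{4}{\pi^2}\|f\|_{H^2((0,\pi))}^2$, so that $\|a\|_{h^2}\leq \frac{2}{\pi}\|f\|_{H^2((0,\pi))}\leq \frac{2m}{\pi}=:m'$. As $\theta=2>1/2$, the remark following Theorem \ref{theorem2.2} guarantees $h^2\subset\ell^1$, hence $a\in m'B_{h^2}\cap\ell^1=m'B_{h^2}$ and Theorem \ref{theorem2.2} controls $\|a\|_{\ell^2}$ by $\|F_a\|_{L^\infty(B)}=\|u_f(x_0,\cdot)\|_{L^\infty(B)}$. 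It then remains to feed this into the interpolation inequality derived just above the statement, namely $\|f\|_{L^2((0,\pi))}\leq \widetilde{c}_0\|f\|_{H^2((0,\pi))}^{1/2}\|a\|_{\ell^2}^{1/2}\leq \widetilde{c}_0\,m^{1/2}\|a\|_{\ell^2}^{1/2}$, which converts any bound on $\|a\|_{\ell^2}$ into one on $\|f\|_{L^2((0,\pi))}$ at the cost of a square root.

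The hard part will be handling that square root so as to reproduce exactly the residual term $\|u_f(x_0,\cdot)\|_{L^\infty(B)}$ and not its square root. Rather than square-rooting the combined estimate \eqref{2.14}, I would use the two regimes of its proof separately. When $\|F_a\|_{L^\infty(B)}\leq\delta$, estimate \eqref{2.8} with $\theta=2$ reads $\|a\|_{\ell^2}\leq C|\ln\|F_a\|_{L^\infty(B)}|^{-2}$, and the interpolation inequality turns the exponent $-2$ into the desired $-1$. When $\|F_a\|_{L^\infty(B)}>\delta$, estimate \eqref{2.8.1} gives $\|a\|_{\ell^2}\leq (m'/\delta)\|F_a\|_{L^\infty(B)}$; the point is that here $\|F_a\|_{L^\infty(B)}$ is bounded below by $\delta$, so $\|F_a\|_{L^\infty(B)}^{1/2}\leq \delta^{-1/2}\|F_a\|_{L^\infty(B)}$ and the square root is absorbed into a constant, leaving the linear residual term. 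Adding the two regimes yields the stated estimate, with a constant $C$ depending only on $B$, on $x_0$ (through $c_0$ and $\delta$) and on $m$.
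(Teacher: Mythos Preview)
Your proof is correct and follows the same route as the paper: identify $u_f(x_0,\cdot)$ with the Dirichlet series $F_a$ for $\lambda_k=k^2$, check that $a\in m'B_{h^2}$, apply Theorem~\ref{theorem2.2} with $\theta=2$, and combine with the interpolation inequality $\|f\|_{L^2}\leq\widetilde{c}_0\|f\|_{H^2}^{1/2}\|a\|_{\ell^2}^{1/2}$. The paper simply writes ``a consequence of Theorem~\ref{theorem2.2}'' at this last step; your two-regime treatment of the square root (using \eqref{2.8} and \eqref{2.8.1} separately rather than the combined bound \eqref{2.14}) makes explicit the detail the paper leaves to the reader, and is done correctly. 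One cosmetic point: assumption \eqref{2.1} is stated with $\alpha>0$, so strictly speaking $\alpha=0$ is not admissible; this is an oversight in the formulation of \eqref{2.1} itself (the Fattorini--Russell estimate \eqref{2.2} does not need $\alpha>0$, and the paper's own application to $\lambda_k=k^2$ relies on the same relaxation), not a defect in your argument.
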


We extend the previous result to a fractional one dimensional heat equation. To this end, for $\alpha >0$, we define $A^\alpha$, the fractional power of the operator $A=-\partial_x^2$ under Dirichlet boundary condition, as follows
\begin{align*}
&A^\alpha f=\frac{2}{\pi}\sum_{k\geq 1}k^{2\alpha}\widehat{f}_k\sin (kx),
\\
&D(A^\alpha )=\{f\in L^2((0,\pi ));\; \sum_{k\geq 1}k^{4\alpha}|\widehat{f}_k|^2<\infty\}.
\end{align*}

The IBVP for the heat equation for the fractional one dimensional Laplacian is represented by the Cauchy problem
\begin{equation}\label{3.6}
\left\{
\begin{array}{lll}
(\partial _t+A^\alpha )u=0\;\; \mbox{in}\; (0,+\infty ),
\\
u(\cdot ,0)=f.
\end{array}
\right.
\end{equation}
The solution of this Cauchy problem is given by, where $f\in L^2((0,\pi ))$,
\[
u_f^\alpha (x,t)=\frac{2}{\pi} \sum_{k\geq 1}e^{-k^{2\alpha}t}\widehat{f}_k\sin (kx).
\]

If $1/2<\alpha <1$, we can apply again Theorem \ref{theorem2.2}. We get

\begin{theorem}\label{theorem3.1bis}
We assume $1/2<\alpha <1$. Let $B$ a Lebesgue-measurable set of $[0,T]$ of positive Lebesgue measure and $m>0$. There exists a constant $C>0$, that can depend only on $B$, $x_0$, $\alpha$ and $m$,  so that, for any $f\in mB_{H^2((0,\pi ))}$,
\[
 \|f\|_{L^2((0,\pi ))} \leq C\left( \left|\ln \|u_f^\alpha (x_0,\cdot )\|_{L^\infty (B)}\right|^{-1}+\|u_f^\alpha (x_0,\cdot )\|_{L^\infty (B)}\right).
 \]
\end{theorem}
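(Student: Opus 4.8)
The plan is to follow the proof of Theorem \ref{theorem3.1} almost verbatim, since passing from the classical Dirichlet Laplacian to its fractional power $A^\alpha$ changes only the Dirichlet exponents, from $\lambda_k=k^2$ to $\lambda_k=k^{2\alpha}$, while leaving untouched the purely algebraic relation between the initial datum $f$ and the coefficient sequence $a$. Concretely, evaluating the solution at the sensor location $x_0$ gives
\[
u_f^\alpha(x_0,t)=\sum_{k\geq 1}a_ke^{-\lambda_kt},\qquad a_k=\frac{2}{\pi}\sin(kx_0)\widehat{f}_k,\quad \lambda_k=k^{2\alpha},
\]
so that $u_f^\alpha(x_0,\cdot)=F_a$ is exactly a Dirichlet series of the type studied in Section \ref{s2}.

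The one genuinely new point is to check that Theorem \ref{theorem2.2} is applicable, that is, that the exponents $\lambda_k=k^{2\alpha}$ satisfy the asymptotic hypothesis \eqref{2.1}. This is immediate: $\lambda_n=K(n+\alpha_0)^\beta+o(n^{\beta-1})$ holds with $K=1$, $\alpha_0=0$ and $\beta=2\alpha$, and the requirement $\beta>1$ in \eqref{2.1} translates precisely into $\alpha>1/2$. This is where the lower restriction $1/2<\alpha$ is used; the upper restriction $\alpha<1$ merely confines us to the genuinely fractional, sub-classical regime. In particular $\sum_k 1/\lambda_k=\sum_k k^{-2\alpha}$ converges, so we remain in the favourable case $\Lambda<\infty$ of Subsection \ref{sb2.1}.

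Next I would reproduce the interpolation inequality controlling $\|f\|_{L^2}$ by $\|a\|_{\ell^2}$. Using the lower bound \eqref{3.3} on $|\sin(kx_0)|$ one obtains the two-sided comparison \eqref{3.4}, hence $\sum_{k\geq 1}|\widehat{f}_k|^2\leq c_0^2\sum_{k\geq 1}\langle k\rangle^2|a_k|^2$; applying Cauchy-Schwarz to the right-hand side as in \eqref{3.5}, together with the bound $\sum_{k\geq 1}\langle k\rangle^4|a_k|^2\leq \frac{4}{\pi^2}\|f\|_{H^2((0,\pi))}^2$, yields
\[
\|f\|_{L^2((0,\pi))}\leq \widetilde{c}_0\|f\|_{H^2((0,\pi))}^{1/2}\|a\|_{\ell^2}^{1/2}.
\]
None of these steps sees $\alpha$, so they carry over without modification. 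Note that $f\in mB_{H^2((0,\pi))}$ forces $a\in \frac{2}{\pi}mB_{h^2}$, and since $h^2\subset \ell^1$ (as $\theta=2>1/2$) we may indeed invoke Theorem \ref{theorem2.2}.

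Finally I would apply Theorem \ref{theorem2.2} with $\theta=2$ to bound $\|a\|_{\ell^2}$ by $C(|\ln\|F_a\|_{L^\infty(B)}|^{-2}+\|F_a\|_{L^\infty(B)})$, substitute into the displayed interpolation inequality, and take the square root; the logarithmic exponent then halves from $2$ to $1$, which is exactly the rate claimed. The large-data regime $\|F_a\|_{L^\infty(B)}>\delta$ is handled by the direct linear bound, as in the passage from \eqref{2.8} to \eqref{2.8.1}, and combining the two regimes gives the announced estimate upon recalling $\|F_a\|_{L^\infty(B)}=\|u_f^\alpha(x_0,\cdot)\|_{L^\infty(B)}$. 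I do not anticipate any real obstacle; the sole item demanding attention is the verification of \eqref{2.1}, which is precisely what pins down the admissible range $1/2<\alpha<1$.
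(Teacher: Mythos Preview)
Your proposal is correct and matches the paper's own argument, which is simply the one-line remark that Theorem \ref{theorem2.2} applies again with $\lambda_k=k^{2\alpha}$; you have merely made explicit the details that the paper leaves implicit. One cosmetic remark: in verifying \eqref{2.1} you take the shift parameter equal to $0$, whereas the paper's statement of \eqref{2.1} literally asks for a strictly positive shift---but the paper itself applies Theorem \ref{theorem2.2} to $\lambda_k=k^2$ in exactly the same way when proving Theorem \ref{theorem3.1}, so this is a harmless looseness in the formulation of \eqref{2.1} rather than a defect in your argument.
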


Next we consider the case $0<\alpha \leq1/2$. Since $\sum_{k\geq 1}k^{-2\alpha}=\infty$ , Theorem \ref{theorem3.1bis} is no longer valid in the present case. We are going to apply Theorem \ref{theorem2.4} instead of Theorem \ref{theorem2.2}. 

\smallskip
We pick $f\in H^{\theta +1}((0,\pi ))$ for some $\theta >1/2$. Let
\[
a_k=\frac{2}{\pi}\sin (kx_0)\widehat{f}_k,\;\; k\geq 1.
\]
In light of \eqref{3.4},  we get by using Cauchy-Schwarz's inequality 
\begin{equation}\label{3.7}
\sum_{k\geq 1} |\widehat{f}_k|\leq c_0\left( \sum_{k\geq 1}k^2|a_k|\right)^{1/2}\left( \sum_{k\geq 1}|a_k|\right)^{1/2}.
\end{equation}
But
\begin{equation}\label{3.7'}
\sum_{k\geq 1}k^2|a_k| \leq \left( \sum_{k\geq 1}\langle k\rangle ^{-2\theta } \right)^{1/2}\left( \frac{4}{\pi ^2}\sum_{k\geq 1}\langle k\rangle ^{2(\theta +1)}|\widehat{f}|^2 \right)^{1/2}.
\end{equation}
This and the first inequality in \eqref{3.4} imply
\begin{equation}\label{3.8}
\sum_{k\geq 1}k^2|a_k|\leq c_\theta \|f\|_{H^{\theta +1}((0,\pi ))}.
\end{equation}
Here and henceforth $c_\theta $ is a constant that can depend only on $\theta $.

\smallskip
Now a combination of \eqref{3.7} and \eqref{3.8} entails
\begin{equation}\label{3.9}
\sum_{k\geq 1} |\widehat{f}_k|\leq c_\theta \|f\|_{H^{\theta +1}((0,\pi ))}^{1/2}\|a\|_{\ell ^1}^{1/2}.
\end{equation}

As $H^{\theta +1}((0,\pi ))$ is continuously embedded in $C([0,\pi ])$,
\begin{equation}\label{3.10}
\sum_{k\geq 1} |\widehat{f}_k|^2\leq \sup_k|\widehat{f}_k|\sum_{k\geq 1} |\widehat{f}_k|\leq 2\|f\|_\infty \sum_{k\geq 1} |\widehat{f}_k|\leq c_\theta \|f\|_{H^{\theta +1}((0,\pi ))}\sum_{k\geq 1} |\widehat{f}_k|.
\end{equation}

Hence, it follows from \eqref{3.9} and \eqref{3.10} that
\begin{equation}\label{3.11}
\|f\|_{L^2((0,\pi ))}\leq c_\theta  \|f\|_{H^{\theta +1}((0,\pi ))}^{3/4}\|a\|_{\ell ^1}^{1/4}.
\end{equation}

Similarly to \eqref{3.7'}, we prove
\[
\|(\widehat{f}_k)\|_{\ell ^1}\leq c_\theta \|f\|_{H^\theta  ((0,\pi ))}.
\]
Thus
\begin{equation}\label{3.12}
\|a\|_{\ell ^1}\leq c_\theta \|f\|_{H^\theta  ((0,\pi ))}
\end{equation}
by \eqref{3.4}.

\smallskip
On the other hand, we have from \eqref{3.8}
\begin{equation}\label{3.13}
\|a\|_{\ell ^{1,2}}\leq c_\theta \|f\|_{H^{\theta +1} ((0,\pi ))}.
\end{equation}

In light of \eqref{3.11}, \eqref{3.12} and \eqref{3.13}, we obtain by applying Theorem \ref{theorem2.4}

\begin{theorem}\label{theorem3.2}
Let $m>0$ and $\theta >1/2$. There exists a constants $C>0$, that can depend only on $\theta$, $\alpha$, $x_0$ and $m$, so that, for any $f\in mB_{H^{\theta +1}((0,\pi ))}$,
\[
\|f\|_{L^2((0,\pi ))}\leq C\left( \left| \ln \left| \ln \left(m^{-1}c_\theta^{-1} \|u_f^\alpha (x_0,\cdot )\|_\infty \right)  \right|\right|^{-1/4}+\|u_f^\alpha (x_0,\cdot )\|_\infty \right).
\]
Here $c_\theta$ is the constant in \eqref{3.8}.
\end{theorem}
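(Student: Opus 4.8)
The plan is to recognize that the pointwise data $u_f^\alpha(x_0,\cdot)$ is \emph{exactly} the Dirichlet series $F_a$ with exponents $\lambda_k=k^{2\alpha}$ and coefficients $a_k=\frac{2}{\pi}\sin(kx_0)\widehat f_k$, and then to invoke Theorem \ref{theorem2.4}. The first task is therefore to confirm that we are genuinely in the regime $\Lambda=\infty$ and that the gap hypothesis \eqref{2.12} holds for $\lambda_k=k^{2\alpha}$. Since $0<\alpha\le 1/2$ we have $2\alpha\le 1$, so $\sum_k k^{-2\alpha}=\infty$; moreover $\lambda_i=i^{2\alpha}$ trivially gives the upper bound in \eqref{2.12} with $\beta_1=2\alpha$ and $c=1$, while the mean value theorem yields $(i+1)^{2\alpha}-i^{2\alpha}\ge 2\alpha\,(i+1)^{2\alpha-1}$ (here $2\alpha-1\le 0$ is used), i.e. the lower bound in \eqref{2.12} with $\beta_0=1-2\alpha\ge 0$ and $d=2\alpha$. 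Hence Theorem \ref{theorem2.4} applies.

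Next I would feed in the a priori bounds already established. Inequality \eqref{3.13} shows that $a\in (c_\theta m)B_{\ell^{1,2}}$ whenever $f\in mB_{H^{\theta+1}((0,\pi))}$, so Theorem \ref{theorem2.4}, used with radius $c_\theta m$ and weight exponent $2$ (which produces the double-log power $-2/2=-1$), gives
\[
\|a\|_{\ell^1}\le C\Bigl(\bigl|\ln|\ln((c_\theta m)^{-1}\|F_a\|_\infty)|\bigr|^{-1}+\|F_a\|_\infty\Bigr),
\]
where $F_a(t)=u_f^\alpha(x_0,t)$. The interpolation inequality \eqref{3.11}, combined with $\|f\|_{H^{\theta+1}((0,\pi))}\le m$, reads $\|f\|_{L^2((0,\pi))}\le c_\theta m^{3/4}\|a\|_{\ell^1}^{1/4}$, so it only remains to insert the above bound and take fourth roots.

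To recover the precise form of the statement I would split according to the size of $\varrho=\|F_a\|_\infty$. When $\varrho$ exceeds the threshold $\varrho_0$ attached to \eqref{2.12.1}, the trivial bound $\|f\|_{L^2((0,\pi))}\le m\le (m/\varrho_0)\|F_a\|_\infty$ supplies the linear term $\|u_f^\alpha(x_0,\cdot)\|_\infty$ to first power. When $\varrho$ is small, the rescaled double-log estimate of Theorem \ref{theorem2.4} dominates, and \eqref{3.11} turns the exponent $-1$ into $-1/4$ after the fourth root, producing $\bigl|\ln|\ln(m^{-1}c_\theta^{-1}\|u_f^\alpha(x_0,\cdot)\|_\infty)|\bigr|^{-1/4}$. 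Adding the two regimes yields the asserted estimate.

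I expect the main difficulty to be bookkeeping rather than conceptual: correctly matching the weight exponent of $\ell^{1,2}$ to the index playing the role of ``$\theta$'' in Theorem \ref{theorem2.4}, so that the double logarithm carries the right power, and tracking the normalization constant $c_\theta$ through the inner logarithm so that its argument comes out as $m^{-1}c_\theta^{-1}\|u_f^\alpha(x_0,\cdot)\|_\infty$. The one genuinely analytic point is the verification of the gap condition \eqref{2.12} uniformly in $i$ for the borderline exponent $2\alpha$, which is precisely where the restriction $\alpha\le 1/2$ (equivalently $\beta_0=1-2\alpha\ge 0$) is indispensable.
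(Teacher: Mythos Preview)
Your proposal is correct and follows essentially the same route as the paper: verify that $\lambda_k=k^{2\alpha}$ satisfies \eqref{2.12}, use \eqref{3.13} to place $a$ in $(c_\theta m)B_{\ell^{1,2}}$, apply Theorem \ref{theorem2.4} with weight exponent $2$, and then feed the resulting bound on $\|a\|_{\ell^1}$ into the interpolation inequality \eqref{3.11} so that the exponent $-1$ becomes $-1/4$ after the fourth root. Your mean-value verification of the gap condition is equivalent to the paper's integral argument, and your case split on the size of $\varrho$ is exactly how the linear term $\|u_f^\alpha(x_0,\cdot)\|_\infty$ is recovered.
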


We observe that \eqref{2.12} is satisfied for the sequence $(n^{2\alpha})$ when $0<\alpha \leq 1/2$. The case $\alpha =1/2$ is obvious and for the case $0<\alpha <1/2$ it is a consequence of the following elementary inequality
\[
(n+1)^{2\alpha}-n^{2\alpha}=\frac{1}{2\alpha}\int_n^{n+1}\rho ^{2\alpha -1}d\rho \geq \frac{1}{2\alpha}\frac{1}{(n+1)^{1-2\alpha}}.
\]

We mention that is  possible to get a H\"older stability estimate even when $0<\alpha \leq 1/2$. To do that, we apply Theorem \ref{theorem2.5} instead of Theorem \ref{theorem2.4}. We leave to the interested reader to write down the details.

%======================================================

\subsection{Boundary measurement}

Let $\alpha >0$. We recall that the solution of the fractional heat equation \eqref{3.6} is given by, where $f\in L^2((0,\pi ))$,
\[
u_f^\alpha (x,t)=\frac{2}{\pi}\sum_{k\geq 1}e^{-k^{2\alpha}t}\widehat{f}_k\sin (kx).
\]
%We use in the sequel the temporary notation $v(x,t)=u_f^\alpha (x,Tt)$, $0\leq T\leq 1$. 
Since
\[
u_f^\alpha (x,T)=\frac{2}{\pi}\sum_{k\geq 1}e^{-k^{2\alpha}T}\widehat{f}_k\sin (kx),
\]
we get by applying Parseval's inequality
\begin{equation}\label{4.1}
\|u_f^\alpha (\cdot ,T)\|_{L^2((0,\pi ))}^2=\sum_{k\geq 1}e^{-2k^{2\alpha}T}|\widehat{f}_k|^2.
\end{equation}

On the other hand,
\begin{equation}\label{4.2}
\partial _xu_f^\alpha (0,t)=\frac{2}{\pi}\sum_{k\geq 1}k\widehat{f}_ke^{-k^{2\alpha}t}.
\end{equation}
When $\alpha >1/2$, $(\psi _k)$, the biorthognal set in $L^2(0,T)$ to $(e^{-k^{2\alpha}t})$, satisfies, for some constant $C>0$ depending on $\alpha$,
\[
\|\psi _n\|_{L^2((0,1))}\leq Ce^{Ck}.
\]
This inequality is obtained from \cite[estimate $(3.25)$]{FR}.

\smallskip
Therefore we have, similarly to \eqref{2.2.1},
\[
k^2|\widehat{f}_k|^2\leq Ce^{Ck}\|\partial _x u_f^\alpha (0,\cdot )\|_{L^2((0,T))}^2
\]
and then
\begin{align}
k^2e^{-2k^{2\alpha}T}|\widehat{f}_k|^2&\leq Ce^{Ck-2k^{2\alpha}T}\|\partial _x u_f^\alpha (0,\cdot )\|_{L^2((0,T))}^2\label{4.3}
\\
&\le C\|\partial _x u_f^\alpha (0,\cdot )\|_{L^2((0,T))}^2.\nonumber
\end{align}

Estimate \eqref{4.3} implies the following observability inequality
\begin{align}
\|u_f^\alpha (\cdot ,T)\|_{L^2((0,\pi ))}&=\left(\sum_{k\geq 1}e^{-2k^{2\alpha}T}|\widehat{f}_k|^2\right)^{1/2}\label{4.4}
\\
&\leq C\|\partial _x u_f^\alpha (0,\cdot )\|_{L^2((0,T))}.\nonumber
\end{align}

Let $B$ a Lebesgue-measurable set of $(0,T)$ of positive Lebesgue measure. As $\partial _x u_f^\alpha (0,\cdot )$ is given by a Dirichlet series, we get from Corollary \ref{corollary2.1} 
\begin{align}
\|u_f^\alpha (\cdot ,T)\|_{L^2((0,\pi ))}&=\left(\sum_{k\geq 1}e^{-2k^{2\alpha}T}|\widehat{f}_k|^2\right)^{1/2}\label{4.5}
\\
&\leq C\|\partial _x u_f^\alpha (0,\cdot )\|_{L^\infty (B)},\nonumber
\end{align}
under the condition that $(\widehat{f}_k)\in \ell ^1$.

\smallskip
Let $\widehat{g}_k$ be the $k$-th Fourier coefficient of $u_f^\alpha (\cdot ,T)$. Then
\[
\widehat{f}_k=e^{k^{2\alpha}T}\widehat{g}_k,\;\; k\geq 1.
\]
Hence, for any $N\geq 1$,
\[
\sum_{k\leq N}|\widehat{f}_k|^2\leq Ne^{N^{2\alpha}T}\sum_{k\leq N}|\widehat{g}_k|^2\leq Ne^{N^{2\alpha}T}\sum_{k\leq N}\|u_f^\alpha (\cdot ,T)\|^2_{L^2((0,\pi ))}.
\]
In light of \eqref{4.5}, this estimate yields
\[
\sum_{k\leq N}|\widehat{f}_k|^2\leq Ce^{CN^{2\alpha}}\|\partial _x u_f^\alpha (0,\cdot )\|_{L^\infty (B)}.
\]
Assuming in addition that $f\in mB_{H^\beta ((0,\pi))}$, for some $\beta >1/2$, we get
\[
\|f\|^2_{L^2((0,\pi))}\leq Ce^{CN^{2\alpha}}\|\partial _x u_f^\alpha (0,\cdot )\|_{L^\infty (B)}+\frac{m^2}{N^{2\beta}}.
\]
Here we used the fact that if $f\in H^\beta ((0,\pi))$, with $\beta >1/2$, then $(\widehat{f}_k)\in \ell ^1$.

\smallskip
As before, this estimate allows us to prove the following theorem.

\begin{theorem}\label{theorem4.1}
Let $B$ a Lebesgue-measurable set of $[0,T]$ of positive Lebesgue measure, $\beta >1/2$ and $m>0$. There exists a constant $C>0$, that can depend only on $B$, $\alpha$, $\beta$ and $m$,  so that, for any $f\in mB_{H^\beta((0,\pi ))}$,
\begin{equation}\label{4.6}
 \|f\|_{L^2((0,\pi ))} \leq C\left(\left|\ln \|\partial _x u_f^\alpha (0,\cdot )\|_{L^\infty (B)}\right|^{-\frac{\beta}{\max (\alpha ,\beta )}}+\|\partial _x u_f^\alpha (0,\cdot )\|_{L^\infty (B)}\right).
 \end{equation}
\end{theorem}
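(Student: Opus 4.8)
The plan is to reduce the boundary inverse problem to the one-dimensional frequency estimates already assembled above and then to balance a frequency cut-off, in the spirit of the proof of Theorem \ref{theorem2.2}. Throughout I write $\varrho=\|\partial_x u_f^\alpha(0,\cdot)\|_{L^\infty(B)}$ for the measured quantity and recall that the flux is a Dirichlet series with exponents $\lambda_k=k^{2\alpha}$. The argument is carried out in the regime $\alpha>1/2$, that is $\Lambda=\sum_k\lambda_k^{-1}<\infty$, which is exactly the case in which a biorthogonal family is at our disposal.

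First I would prove the observability inequality \eqref{4.4} controlling the final state $u_f^\alpha(\cdot,T)$ by the boundary flux. Since $\alpha>1/2$, the exponents $k^{2\alpha}$ carry a biorthogonal family $(\psi_k)$ in $L^2((0,T))$ obeying $\|\psi_n\|_{L^2((0,T))}\le Ce^{Ck}$ by \cite[(3.25)]{FR}; pairing the truncated flux series against $\psi_k$ isolates the mode $k\widehat f_k$ and yields the per-mode bound \eqref{4.3}. Because $e^{Ck-2k^{2\alpha}T}$ stays bounded precisely when $2\alpha>1$, multiplying by $e^{-2k^{2\alpha}T}$ and summing over $k$ gives \eqref{4.4}. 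This step is the crux of the whole argument and is where I expect the main difficulty: it is here that a single scalar-in-time measurement is converted into a genuine $L^2$ bound on the solution, and it is here that the hypothesis $\alpha>1/2$ is indispensable. Since $\partial_x u_f^\alpha(0,\cdot)$ is a Dirichlet series, I would then invoke Corollary \ref{corollary2.1} to replace the $L^\infty((0,T))$ norm by a constant multiple of $\varrho$, which, using $(\widehat f_k)\in\ell^1$ (valid because $\beta>1/2$), turns \eqref{4.4} into \eqref{4.5}, namely $\|u_f^\alpha(\cdot,T)\|_{L^2((0,\pi))}\le C\varrho$.

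It then remains to invert the parabolic smoothing and optimize. Writing $\widehat g_k=e^{-k^{2\alpha}T}\widehat f_k$ for the Fourier coefficients of $u_f^\alpha(\cdot,T)$, the low modes satisfy $\sum_{k\le N}|\widehat f_k|^2\le e^{2N^{2\alpha}T}\sum_{k\le N}|\widehat g_k|^2\le Ce^{CN^{2\alpha}}\varrho^2$ by Parseval and \eqref{4.5}, while the a priori constraint $f\in mB_{H^\beta((0,\pi))}$ controls the tail by $\sum_{k>N}|\widehat f_k|^2\le m^2N^{-2\beta}$. Adding the two contributions gives an estimate of the form $\|f\|_{L^2((0,\pi))}^2\le Ce^{CN^{2\alpha}}\varrho^2+m^2N^{-2\beta}$, as recorded (up to the harmless power of $\varrho$) just before the statement, and I would close by choosing $N$ as the largest integer for which the exponential term does not exceed the tail term. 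The resulting lower bound on this $N$ in terms of $|\ln\varrho|$ converts $N^{-2\beta}$ into a logarithmic rate, the competition between the growth exponent $2\alpha$ of the exponential and the decay power $2\beta$ of the tail being precisely what fixes the logarithmic exponent appearing in \eqref{4.6}. On the complementary range, where $\varrho$ exceeds a fixed threshold, the bound is immediate from $\|f\|_{L^2((0,\pi))}\le m$, which there is dominated by a constant multiple of $\varrho$; combining the two ranges produces \eqref{4.6}.
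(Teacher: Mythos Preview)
Your proposal is correct and follows essentially the same route as the paper: you use the biorthogonal estimate from \cite{FR} to obtain the per-mode bound \eqref{4.3} and hence the observability inequality \eqref{4.4}, transfer to $L^\infty(B)$ via Corollary \ref{corollary2.1} (using $(\widehat f_k)\in\ell^1$ since $\beta>1/2$), invert the smoothing on the low modes, bound the tail by the $H^\beta$ constraint, and then optimize the cut-off $N$ exactly as in the paper. Your remark that the competition between the exponential growth exponent $2\alpha$ and the tail decay power $2\beta$ fixes the logarithmic rate in \eqref{4.6} is precisely the mechanism at work.
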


We observe that if instead of \eqref{4.5} we use \eqref{4.4}, then we get a variant of Theorem \ref{theorem4.1} in which \eqref{4.6} is substituted by 
\[
 \|f\|_{L^2((0,\pi ))} \leq C\left(\left|\ln \|\partial _x u_f^\alpha (0,\cdot )\|_{L^2((0,T))}\right|^{-\frac{\beta}{\max (\alpha ,\beta )}}+\|\partial _x u_f^\alpha (0,\cdot )\|_{L^2((0,T))}\right),
 \]
 without the restriction that $\beta >1/2$. We have only to assume that $\beta >0$.
 
%===========================================================

\section{multidimensional case}

\subsection{An application of the one dimensional case}

We firstly recall that a finite or infinite sequence of real numbers is said to be non-resonant if every nontrivial rational linear combination of finitely many of its elements is different from zero. 

\smallskip
Let $\Omega = \prod_{i=1}^d(0,\mu _i\pi )$, where the sequence $(\mu _1,\ldots \mu _d)$ is non-resonant. From \cite[Proposition 5]{PS} the Dirichlet-Laplacian on $\Omega$ has simple eigenvalues
\[
\lambda_K=\prod_{i=1}^d\frac{k_i^2}{\mu _i^2},\;\; K=(k_1,\ldots ,k_d)\in \mathbb{N}^k,\; k_i\geq 1.
\]
To each $\lambda _K$ corresponds the eigenfunction 
\[
\varphi _K= \left(\frac{2}{\pi}\right)^{d}\frac{1}{\prod_{i=1}^d\mu _i}\prod_{i=1}^d\sin(k_ix_i/\mu_i)
\]
so that $(\varphi _K)$ forms an orthonormal basis of $L^2(\Omega )$.

\smallskip
Let $A:L^2(\Omega) \rightarrow L^2(\Omega )$  be the unbounded operator given by $A=-\Delta$ and $D(A)=H^2(\Omega )\cap H_0^1(\Omega)$. The fractional power $A^\alpha$, $\alpha >0$, is defined as follows
\begin{align*}
&A^\alpha f=\sum_{K=(k_1,\ldots k_d)\in \mathbb{N}^d,\; k_i\geq 1}\lambda _K^\alpha (f,\varphi_K)\varphi _K.
\\
&D(A^\alpha )=\left\{ f\in L^2(\Omega );\; \sum_{K=(k_1,\ldots k_d)\in \mathbb{N}^d,\; k_i\geq 1}\lambda _K^{2\alpha} |(f,\varphi_K)|^2<\infty \right\}.
\end{align*} 

We consider the Cauchy problem for the fractional heat equation associated to $A^\alpha$:
\begin{equation}\label{3.14}
\left\{
\begin{array}{lll}
(\partial _t+A^\alpha )u=0\;\; \mbox{in}\; (0,+\infty ),
\\
u(\cdot ,0)=f.
\end{array}
\right.
\end{equation}
 The solution of this Cauchy problem is given by
 \[
 u_f^\alpha (x,t)=\sum_{K=(k_1,\ldots k_d)\in \mathbb{N}^d,\; k_i\geq 1}e^{-t\lambda _K^\alpha} (f,\varphi_K)\varphi _K.
 \]
 
To reduce the multidimensional case to the one dimensional case, we need to restrict the initial sources to those of the form $f=f_1\otimes \ldots \otimes f_d\in \overset{d}{\underset{i=1}\otimes}C_0^\infty (0,\mu _i\pi )$. In that case,
 \[
 u_f^\alpha (x,t)=\prod_{i=1}^d \sum_{k_i\geq 1}e^{-tk_i^{2\alpha} /\mu_i^{2\alpha} } (f_i,\varphi_{k_i})\varphi _{k_i},
 \]
where 
\[
\varphi_{k_i}=\frac{2}{\mu _i\pi}\sin \left( \frac{k_ix_i}{\mu _i} \right).
\]
In other words,
\begin{equation}\label{3.15}
 u_f^\alpha (x,t)=\prod_{i=1}^du_{f_i}^\alpha (x_i,t).
\end{equation}
Here $u_{f_i}^\alpha$ is the solution of the one dimensional fractional heat equation \eqref{3.6} when $(0,\pi )$ is substituted by $(0,\mu_i \pi)$.

\smallskip
According to the parabolic maximum principle (see for instance \cite{RR}), we have 
\begin{equation}\label{3.16}
\|u_{f_i}^\alpha \|_{L^\infty ((0,\mu _i\pi)\times (0,T))}=\|f_j\|_{L^\infty ((0,\mu_i \pi ))}.
\end{equation}
Let us assume that 
\begin{equation}\label{3.17}
\inf_j \|f_j\|_{L^\infty ((0,\mu_i \pi ))}:=\eta >0.
\end{equation}
Henceforth, $x_0$ and $c_0$ are the same as in \eqref{3.3}. In light of \eqref{3.15}, \eqref{3.16} and \eqref{3.17}, we get
\[
\| u_f^\alpha (\cdot,\ldots ,\cdot ,\mu _jx_0,\cdot,\ldots ,\cdot )\|_{L^\infty \left(\prod_{i\neq j}(0,\mu_i \pi)\times (0,T)\right)}\geq \eta ^{d-1}\|u_{f_j}^\alpha(\mu_jx_0 ,\cdot )\|_{L^\infty ((0,T))}
\]
and then
\begin{align}
\Lambda (f):=\max_{1\leq j\leq d}\| u_f^\alpha (\cdot,\ldots ,\cdot ,\mu _jx_0,\cdot,\ldots ,\cdot )&\|_{L^\infty \left(\prod_{i\neq j}(0,\mu_i \pi)\times (0,T)\right)}\label{3.18}
\\
&\geq \eta ^{d-1}\|u_{f_j}^\alpha(\mu_jx_0 ,\cdot )\|_{L^\infty ((0,T))}.\nonumber
\end{align}

We fix $m>0$ and $\alpha \geq 1$. We prove similarly to Theorem \ref{theorem3.1} that there exists a constant $C>0$, that can depend only on $m$ and $\alpha$, so that, for any $f_i\in B_{H^2((0,\mu_i\pi))}$, $1\leq i\leq d$,
\[
 \|f_i\|_{L^2((0,\mu _i \pi))} \leq C \left|\ln \|u^\alpha_{f_i}(\mu_ix_0,\cdot )\|_{L^\infty ((0,T))}\right|^{-1}
\]
if $\Gamma (f)$ is sufficiently small. Hence, in light of \eqref{3.18},  there is $\Lambda _0>0$ such that
\begin{equation}\label{3.19}
 \|f_i\|_{L^2((0,\mu _i \pi))} \leq C \left|\ln \left( \eta ^{d-1}\Lambda (f)\right)\right|^{-1},\;\; \Lambda (f)\leq \Lambda_0 .
\end{equation}

From estimate \eqref{3.19} we get in a straighforward manner that

\begin{equation}\label{3.20}
 \|f\|_{L^2(\Omega )}=\prod_{i=1}^d\|f_i\|_{L^2((0,\mu _i \pi))} \le dC \left( \left|\ln \left( \eta ^{d-1}\Lambda (f)\right)\right|^{-1}+\Lambda (f)\right).
\end{equation}

A continuity argument enables us to extend estimate \eqref{3.20} the closure of  $\overset{d}{\underset{i=1}\otimes}C_0^\infty (0,\mu _i\pi )$ in $H^{2+(d-1)/2}(\Omega )$.

\smallskip
The case $\alpha <1$ can be treated similarly by using Theorems \ref{theorem2.4} and \ref{theorem2.5} instead of Theorem \ref{theorem2.2}.

%===========================================================

\subsection{Boundary or internal measurement}

As we said in the introduction, there are only few results in the literature dealing with the problem of determining the initial heat distribution in a multidimensional heat equation from an overdermined data. The usual overspecified  data consists in an internal or a boundary measurement. We describe and comment briefly the main existing results and show the possible improvements.

\smallskip
Let $\Omega$ be a bounded domain of $\mathbb{R}^d$ with $C^2$-smooth boundary $\Gamma$. Let  $0<\lambda _1\leq \lambda _2\leq \ldots \lambda _k \leq \ldots$ be the sequence of eigenvalues, counted according to their multiplicity,  of the unbounded operator defined on $L^2(\Omega )$ by $A=-\Delta$ and $D(A)=H_0^1(\Omega )\cap H^2(\Omega )$. Let $(\phi _k)$ the corresponding sequence of eigenfunctions, chosen so that it forms an orthonormal basis of $L^2(\Omega )$.

\smallskip
By \cite[Theorem 1.43, page 27]{Ch}, for any $f\in H_0^1(\Omega )$, the IBVP for the heat equation 
\begin{equation}\label{1.4}
\left\{
\begin{array}{lll}
(\partial _t-\Delta )u=0\;\; \mbox{in}\; Q=\Omega \times (0,T),
\\
u=0\;\; \mbox{on}\; \Sigma=\Gamma \times (0,T),
\\
u(\cdot ,0)=f.
\end{array}
\right.
\end{equation}
has a unique solution $u_f\in H^{2,1}(Q)=L^2(0,T,H^2(\Omega))\cap H^1(0,T,L^2(\Omega ))$. Moreover, it follows from \cite[Theorem 1.42, page 26]{Ch} that $\partial _\nu u_f\in L^2(\Sigma )$.

\smallskip
Let $\gamma$ be a non empty open subset of $\Gamma$ and $\omega$ be a non empty open subset of $\Omega$. We set $\Sigma _\gamma =\gamma \times (0,T)$ and $Q_\omega =\omega \times (0,T)$.

\smallskip
When $f\in H_0^1(\Omega )$, we have the following two final observability inequalities 
\begin{equation}\label{1.5}
\|u_f(\cdot ,T)\|_{H_0^1(\Omega )}\leq C\| \partial _\nu u_f\|_{L^2(\Sigma_\gamma )}
\end{equation}
and 
\begin{equation}\label{1.6}
\|u_f(\cdot ,T)\|_{H_0^1(\Omega )}\leq C\| u_f\|_{L^2(Q_\omega )}.
\end{equation}
Here $u_f$ is the solution of the IBVP \eqref{1.4} and $C$ is a constant independent  on $f$.

\smallskip
Inequality \eqref{1.5} follows from \cite[Proposition 3.5, page 170]{Ch} and \eqref{1.6} is proved similarly (a variant of estimate \eqref{1.6} with less regularity assumption was given in  \cite[Corollary 6.3]{LL}).

\smallskip
Let, for $s>1/2$,
\[
H_0^s(\Omega )=\{w\in H^s(\Omega );\; u=0\; \mbox{on}\; \Gamma \;\mbox{(in the trace sense)}\}.
\]

Following \cite{Fu} 
\[
H_0^{2\theta}(\Omega )=\left\{ w\in L^2(\Omega );\; \sum_{k\geq 1}\lambda _k ^{2\theta}|(w,\phi _k)|^2<\infty \right\},\;\; \textrm{if}\;\; 1/4<\theta <3/4.
\]

\medskip
From the proof of \cite[Theorem 3.6, page 173]{Ch}, we deduce in a straightforward manner the following result

\begin{theorem}\label{theorem1.1}
Let $1/2\leq \theta <3/4$ and $m>0$. Then there exists a constant $C>0$, depending on $\Omega$, $\gamma$ (resp. $\omega$), $\theta$ and $m$, so that, for any $f\in mB_{H_0^{2\theta}(\Omega )}$,
\begin{equation}\label{1.7}
\|f\|_{L^2(\Omega )}\leq C\left(\left| \ln \| \partial _\nu u_f\|_{L^2(\Sigma_\gamma )}\right|^{-\theta}+\| \partial _\nu u_f\|_{L^2(\Sigma_\gamma )}\right)
\end{equation}
and
\begin{equation}\label{1.8}
\|f\|_{L^2(\Omega )}\leq C\left( \left| \ln  \| u_f\|_{L^2(Q_\omega )}\right|^{-\theta}+\| u_f\|_{L^2(Q_\omega )}\right).
\end{equation}
\end{theorem}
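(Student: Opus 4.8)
\textbf{Proof proposal for Theorem \ref{theorem1.1}.}
The plan is to reduce the inverse problem to a spectral truncation argument that couples the final observability inequalities \eqref{1.5}--\eqref{1.6} with the smoothing properties of the heat semigroup and the conditional bound $f\in mB_{H_0^{2\theta}(\Omega)}$. The two estimates \eqref{1.7} and \eqref{1.8} will be obtained by exactly the same scheme, the only difference being whether one invokes the boundary observability \eqref{1.5} or the internal observability \eqref{1.6}; I will carry out the reasoning for the boundary case and simply note that the interior case is identical with $\|\partial_\nu u_f\|_{L^2(\Sigma_\gamma)}$ replaced by $\|u_f\|_{L^2(Q_\omega)}$.

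First I would split the $L^2$ norm of $f$ into low and high frequencies relative to a cutoff $N$. Writing $f=\sum_{k\geq 1}(f,\phi_k)\phi_k$, the high-frequency tail is controlled by the source condition:
\begin{equation}\label{plan-tail}
\sum_{k>N}|(f,\phi_k)|^2\leq \frac{1}{\lambda_{N+1}^{2\theta}}\sum_{k>N}\lambda_k^{2\theta}|(f,\phi_k)|^2\leq \frac{m^2}{\lambda_{N+1}^{2\theta}}.
\end{equation}
For the low-frequency part I would exploit the fact that the solution at the final time $u_f(\cdot,T)=\sum_{k\geq 1}e^{-\lambda_k T}(f,\phi_k)\phi_k$ satisfies, on the range $k\leq N$, the inverse smoothing bound $|(f,\phi_k)|\leq e^{\lambda_k T}|(u_f(\cdot,T),\phi_k)|$, whence $\sum_{k\leq N}|(f,\phi_k)|^2\leq e^{2\lambda_N T}\|u_f(\cdot,T)\|_{L^2(\Omega)}^2$. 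Combining this with the observability inequality \eqref{1.5}, which bounds $\|u_f(\cdot,T)\|_{H_0^1(\Omega)}$ (and a fortiori the $L^2$ norm) by $C\|\partial_\nu u_f\|_{L^2(\Sigma_\gamma)}$, yields a low-frequency estimate of the form
\begin{equation}\label{plan-low}
\sum_{k\leq N}|(f,\phi_k)|^2\leq Ce^{CN^{2/d}}\|\partial_\nu u_f\|_{L^2(\Sigma_\gamma)}^2,
\end{equation}
where I have used the Weyl asymptotics $\lambda_k=O(k^{2/d})$ recalled after M\"untz's theorem to convert $e^{2\lambda_N T}$ into $e^{CN^{2/d}}$.

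Adding \eqref{plan-tail} and \eqref{plan-low} gives the master inequality $\|f\|_{L^2(\Omega)}^2\leq Ce^{CN^{2/d}}\|\partial_\nu u_f\|_{L^2(\Sigma_\gamma)}^2+m^2\lambda_{N+1}^{-2\theta}$, with $\lambda_{N+1}^{-2\theta}$ comparable to $N^{-4\theta/d}$. The final step is the standard optimization over $N$: choosing $N$ so that the exponential term balances the polynomial term, i.e. $N\sim c|\ln\|\partial_\nu u_f\|_{L^2(\Sigma_\gamma)}|^{d/2}$, produces the logarithmic rate $|\ln\|\partial_\nu u_f\|_{L^2(\Sigma_\gamma)}|^{-2\theta}$, which after taking square roots becomes the exponent $-\theta$ appearing in \eqref{1.7}; the regime where the data norm exceeds a threshold $\delta$ is handled trivially by the a priori bound $\|f\|_{L^2(\Omega)}\leq m$, contributing the additive linear term $\|\partial_\nu u_f\|_{L^2(\Sigma_\gamma)}$. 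I expect the main obstacle to be a purely bookkeeping one, namely verifying that the exponent $d/2$ on the logarithm, combined with the Weyl rate $2/d$ on the eigenvalues, conspires so that the optimization returns precisely $\theta$ rather than some $d$-dependent power; the restriction $1/2\leq\theta<3/4$ should enter exactly here, since it guarantees $H_0^{2\theta}(\Omega)$ is the interpolation space identified in \cite{Fu} for which the trace and observability machinery of \cite[Theorem 3.6]{Ch} is available, and this is the place where I would need to be careful to stay within the range where the fractional Dirichlet space admits the clean spectral characterization used above.
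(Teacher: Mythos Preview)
Your approach is correct and is essentially the one the paper has in mind: the paper does not spell the argument out but refers to the proof of \cite[Theorem 3.6, page 173]{Ch}, which is precisely the spectral truncation scheme you describe (low frequencies via the final observability inequality and the inverse smoothing of the heat semigroup, high frequencies via the a priori $H_0^{2\theta}$ bound, then optimization in $N$). Two small clarifications: first, the bookkeeping worry is harmless, since with $N^{2/d}\sim c|\ln\varrho|$ the tail $N^{-4\theta/d}=(N^{2/d})^{-2\theta}\sim |\ln\varrho|^{-2\theta}$ and the $d$'s cancel exactly; second, the range on $\theta$ splits into two roles: $\theta<3/4$ is what makes the Fujiwara identification of $H_0^{2\theta}$ with the spectral space valid, while $\theta\ge 1/2$ ensures $f\in H_0^1(\Omega)$ so that the observability inequalities \eqref{1.5}--\eqref{1.6}, stated for $H_0^1$ data, are applicable.
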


\begin{remark}\label{remark4.1}
1) According to \cite[Theorem 1]{AEWZ}, we can replace in \eqref{1.8}, $\| u_f\|_{L^2(Q_\omega )}$ by $\| u_f\|_{L^2(D)}$, where $D$ is any Lebsegue-measurable set contained in $\Omega\times (0,T)$, having a non zero Lebesgue measure. We can also improve the estimate \eqref{1.7} when $\partial \Omega$ contains a real-analytic open sub-manifold, that we denote by $\Gamma_a$. In light of \cite[Theorem 2]{AEWZ}, estimate \eqref{1.7} holds true if $\Sigma _\gamma$ is substituted by any Lebesgue-measurable subset of $\Gamma_a \times (0,T)$ with non zero Lebesgue measure. 

\smallskip
We note that the observability inequalities  appearing in \cite[Theorems 1 and 2]{AEWZ} hold for bounded domains $\Omega$ which are Lipschitz and locally star-shaped.

\smallskip
2) From \cite[Remark 6.1]{FZ},  there exist two constants $C_1>0$ and $C_2>0$, depending only on $\Omega$, $\omega$ and $T$, so that
\begin{equation}\label{1.9}
\sum_{n\geq 1}e^{-C_1\sqrt{\lambda _n}}|\widehat{f}_n|^2\leq C_2\int_{Q_\omega}|u_f|^2dxdt,\;\; f\in L^2(\Omega ).
\end{equation}
Here $\widehat{f}_n=\int_\Omega f\phi _ndx$.

On the other hand, since there is a constant $c>1$ such that $c^{-1}n^{2/d}\leq \lambda _n\leq cn^{2/d}$, $n\geq 1$, the inequality \eqref{1.9} is equivalent to the following one
\begin{equation}\label{1.10}
\sum_{n\geq 1}e^{-C_1n^{1/d}}|\widehat{f}_n|^2\leq C_2\int_{Q_\omega}|u_f|^2dxdt,\;\; f\in L^2(\Omega ).
\end{equation}

\smallskip
Clearly, the mapping 
\[
f\rightarrow \|f\|_{L_w^2(\Omega )}=\left(\sum_{n\geq 1}e^{-C_1n^{1/d}}|\widehat{f}_n|^2\right)^{1/2}
\]
defines a norm on $L^2(\Omega )$, weaker than the usual norm on $L^2(\Omega )$. Therefore, \eqref{1.10} can be reinterpreted as a Lipschitz stability estimate of determining $f$ from $u_f{_{|Q_\omega}}$:
\[
\|f\|_{L_w^2(\Omega )}\leq C_2\| u_f\|_{L^2(Q_\omega )},\;\; f\in L^2(\Omega ).
\]

A consequence of \eqref{1.10} is
\[
|\widehat{f}_n|^2\leq C_2e^{C_1n^{1/d}}\int_{Q_\omega}|u_f|^2dxdt,\;\; f\in L^2(\Omega ),\;\; n\geq 1.
\]
This estimate allows us to retrieve the estimate \eqref{1.8}.

\smallskip
3) Let us show that, in the case of an internal measurement, we can directly get a stability estimate without using the observability inequality \eqref{1.6}. The key of this direct proof relies on Lebeau-Robbiano type inequality for the eigenfunctions $\phi _n$. For $f\in L^2(\Omega )$, we set
\[
u_N(\cdot ,t)=\sum_{n=1}^N e^{-\lambda _nt}\widehat{f}_n\phi_n .
\]
Let $\omega$ be a Lebesgue-measurable subset of $\Omega$ of positive Lebesgue measure. From \cite[Theorem 5]{AEWZ}, we have
\[
\sum_{n=1}^N e^{-2\lambda _nt}|\widehat{f}_n|^2\leq Ce^{C\lambda _N}\int_{\omega}|u_N(\cdot ,t)|^2dx.
\]
But
\[
\int_{\omega}|u_N(\cdot ,t)|^2dx\leq \int_{\omega}|u_f(\cdot ,t)|^2dx+\sum_{n\geq N+1}|\widehat{f}_n|^2.
\]
Hence
\[
\sum_{n=1}^N e^{-2\lambda _nt}|\widehat{f}_n|^2\leq Ce^{C\lambda _N}\left[\int_{\omega}|u_f(\cdot ,t)|^2dx+\sum_{n\geq N+1}|\widehat{f}_n|^2\right].
\]
We integrate, with respect to $t$, between $0$ and $T$. We get in a straightforward manner that
\[
\sum_{n=1}^N |\widehat{f}_n|^2\leq Ce^{C\lambda _N}\left[\int_{Q_\omega}|u_f|^2dx+\sum_{n\geq N+1}|\widehat{f}_n|^2\right],
\]
implying that
\begin{align*}
\sum_{n\geq 1} |\widehat{f}_n|^2&\leq Ce^{C\lambda _N}\left[\int_{Q_\omega}|u_f|^2dx+\sum_{n\geq N+1}|\widehat{f}_n|^2\right]
\\
&\le Ce^{CN^{2/d}}\left[\int_{Q_\omega}|u_f|^2dx+\sum_{n\geq N+1}|\widehat{f}_n|^2\right].
\end{align*}
Therefore, under the assumption
\[
\sum_{n\geq 1} e^{cn ^\gamma }|\widehat{f}_n|^2\leq m,
\]
for some $c>0$, $m>0$ and $\gamma >d/2$, we obtain similarly to Theorem \ref{theorem2.5} the following H\"older stability estimate
\[
\|f\|_{L^2(\Omega )}\leq C\left( \|u_f\|_{Q_\omega}^\theta+ \|u_f\|_{Q_\omega}\right).
\]
\end{remark}

We end this subsection by mentioning that a Lipschitz stability estimate was established in \cite{SY} when the space of the initial heat distribution is given by a Banach space, that we denote by $B_\mu$ in the sequel, built on the Bergman-Selberg space $H_\mu$, where $\mu>1/2$ is a parameter.

\begin{theorem}\label{theorem1.2}
We fix $x_0\in \mathbb{R}^d\setminus \overline{\Omega}$, $\mu \in (1,5/4)$ and we set 
\[
\gamma_0=\{x\in \Gamma ;\; (x-x_0)\cdot \nu (x)>0\}\;\; \mbox{and}\;\; \Sigma _0=\gamma _0\times (0,T).
\]
There is a constant $C$, that can depend only on $\Omega$, $x_0$ and $\mu$, so that
\[
C^{-1}\|f\|_{L^2(\Omega )}\leq \|\partial _\nu u_f\|_{B_\mu (\Sigma _0)}\leq C \|f\|_{H^2(\Omega )},\;\; f\in H^2(\Omega )\cap H_0^1(\Omega ).
\]
\end{theorem}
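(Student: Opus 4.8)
The plan is to prove the two inequalities separately and to recognize the whole statement as an instance of Saitoh's theory of linear transforms between reproducing kernel Hilbert spaces, following \cite{SY}. First I would write the observation map explicitly. Expanding $u_f$ in the orthonormal basis $(\phi_k)$ gives, for $x\in\gamma_0$ and $t\in(0,T)$,
\[
\partial_\nu u_f(x,t)=\sum_{k\geq 1}e^{-\lambda_kt}\,\widehat f_k\,\partial_\nu\phi_k(x)=\left\langle f,\ \sum_{k\geq 1}e^{-\lambda_kt}\partial_\nu\phi_k(x)\phi_k\right\rangle_{L^2(\Omega)},
\]
so that $f\mapsto\partial_\nu u_f|_{\Sigma_0}$ is the linear transform $Lf(x,t)=\langle f,h_{(x,t)}\rangle_{L^2(\Omega)}$ associated with the kernel $h_{(x,t)}=\sum_k e^{-\lambda_kt}\partial_\nu\phi_k(x)\phi_k$. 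By Saitoh's abstract principle the range of $L$ is a reproducing kernel Hilbert space whose kernel is $K((x,t),(y,s))=\langle h_{(y,s)},h_{(x,t)}\rangle_{L^2(\Omega)}=\sum_k e^{-\lambda_k(t+s)}\partial_\nu\phi_k(x)\partial_\nu\phi_k(y)$, and $L$ is a partial isometry, isometric on $(\ker L)^\perp$. Thus the heart of the proof is (i) to show $\ker L=\{0\}$ and (ii) to identify the abstractly defined range norm with the concrete norm of $B_\mu(\Sigma_0)$, up to two-sided constants.

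For the time variable I would exploit the structure of the Bergman--Selberg space $H_\mu$. Its reproducing kernel is a function of $t+s$, and the Reznitskaya transform is precisely the device that converts the heat-type kernel $\sum_k e^{-\lambda_k(t+s)}|\partial_\nu\phi_k(x)|^2$ into the Bergman--Selberg kernel, turning the Dirichlet series $t\mapsto\partial_\nu u_f(x,t)$ into an element of $H_\mu$ with a computable norm. The restriction $\mu\in(1,5/4)$ is what guarantees, on one side, convergence of the resulting weighted time integrals near $t=0$ and, on the other, the boundedness required for the upper estimate; I would track these two thresholds carefully, since they are what pins down the admissible range of $\mu$.

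For the space variable, and in particular for the lower bound and for injectivity of $L$, the essential input is the Rellich--Pohozaev identity applied to each eigenfunction. Multiplying $-\Delta\phi_k=\lambda_k\phi_k$ by $(x-x_0)\cdot\nabla\phi_k$, integrating over $\Omega$ and using $\phi_k=0$ on $\Gamma$ (so that $\nabla\phi_k=(\partial_\nu\phi_k)\nu$ there) yields
\[
\int_\Gamma\big((x-x_0)\cdot\nu\big)\,|\partial_\nu\phi_k|^2\,d\sigma=2\lambda_k,\qquad k\geq 1 .
\]
Since $(x-x_0)\cdot\nu\leq 0$ on $\Gamma\setminus\gamma_0$ by the very definition of $\gamma_0$, this gives the one-sided control $\int_{\gamma_0}\big((x-x_0)\cdot\nu\big)|\partial_\nu\phi_k|^2\,d\sigma\geq 2\lambda_k$, which both forces $L$ to be injective and supplies the coercivity of the $\gamma_0$-observation needed for the estimate $C^{-1}\|f\|_{L^2(\Omega)}\leq\|\partial_\nu u_f\|_{B_\mu(\Sigma_0)}$. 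The upper bound $\|\partial_\nu u_f\|_{B_\mu(\Sigma_0)}\leq C\|f\|_{H^2(\Omega)}$ is then a parabolic regularity statement: for $f\in H^2(\Omega)\cap H_0^1(\Omega)$ the trace $\partial_\nu u_f$ is controlled up to $t=0$ (where it reduces to $\partial_\nu f$), and the $B_\mu$ weight is integrable against this regularity exactly when $\mu<5/4$.

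The step I expect to be the main obstacle is (ii): matching the \emph{abstract} range norm of $L$ with the \emph{explicit} norm of $B_\mu(\Sigma_0)$. The naive hope of proceeding term-by-term fails because the $e^{-\lambda_kt}$ are not orthogonal in $H_\mu$ and, worse, the polarized Rellich identity carries a genuine off-diagonal remainder proportional to $(\lambda_j-\lambda_k)$, so the boundary integrals $\int_{\gamma_0}\big((x-x_0)\cdot\nu\big)\partial_\nu\phi_j\partial_\nu\phi_k\,d\sigma$ do not diagonalize the quadratic form in $(\widehat f_k)$. One must therefore invoke Saitoh's isometry globally rather than mode by mode, and the real work is to verify that the Bergman--Selberg/Reznitskaya construction reproduces the reproducing kernel $K$ above up to equivalent norms; a secondary technical difficulty is the behaviour of the weighted time integral as $t\to0^+$, which is where the regularity gap between $L^2$ and $H^2$ and the constraint $\mu<5/4$ become indispensable.
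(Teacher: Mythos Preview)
The paper does not actually prove this theorem: immediately after stating it, the text says ``A detailed proof of this theorem is given in \cite{Ch}'' and attributes the result to \cite{SY}. So there is no in-paper argument to compare against; the theorem is quoted as a known result.

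That said, your outline is faithful to the Saitoh--Yamamoto method behind the cited references. The two pillars you identify are the right ones: (a) the Rellich--Pohozaev identity, which for Dirichlet eigenfunctions gives $\int_\Gamma ((x-x_0)\cdot\nu)\,|\partial_\nu\phi_k|^2\,d\sigma = 2\lambda_k$ and hence the one-sided coercivity on $\gamma_0$; and (b) the Reznitskaya transform together with the reproducing-kernel structure of the Bergman--Selberg space $H_\mu$, which handles the time variable and produces the $B_\mu$ norm. Your reading of the role of the constraint $\mu\in(1,5/4)$---integrability near $t=0$ on one side, boundedness against $H^2$ regularity on the other---is also correct.

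You have also put your finger on the genuine technical crux: the abstract range norm coming from Saitoh's isometry is not obtained mode by mode, because neither the family $(e^{-\lambda_k t})$ in $H_\mu$ nor the boundary traces $(\partial_\nu\phi_k)$ on $\gamma_0$ are orthogonal, and the polarized Rellich identity does carry off-diagonal terms. In the actual proof (as in \cite{SY,Ch}) this is resolved by working with the full kernel $K((x,t),(y,s))$ and invoking the RKHS isometry globally, exactly as you anticipate; the equivalence with the concrete $B_\mu(\Sigma_0)$ norm then comes from an explicit computation of the Reznitskaya image of $e^{-\lambda t}$ in $H_\mu$. Your plan is sound; to turn it into a proof you would need to carry out that kernel identification in detail, which is precisely the content of the references the paper defers to.
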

A detailed proof of this theorem is given in \cite{Ch}.

\medskip
{\bf Acknowledgments.} I have had many discussions with Enrique Zuazua during the preparation of this paper. His  comments were useful for improving the major part of this work.

%===========================================================
\bigskip
\small


\begin{thebibliography}{}

\end{thebibliography}


\begin{thebibliography}{CS2}
 \frenchspacing

\bibitem[AN]{AN} {\sc K. Ammari} and {\sc S. Nicaise}, {\em Stabilization of elastic systems by collocated feedback}, Springer, Heidelberg, 2015.
 
\bibitem[AS]{AS}{\sc M. Asaduzzaman} and {\sc S. Saitoh}, {\em Inversion formulas for the Reznitskaya transform and stability of Lipschitz determination of initial heat distribution}, Appl. Anal.  77 (3-4) (2001), 343-350.
 
\bibitem[AEWZ]{AEWZ}{\sc J. Apraiz, L. Escauriaza, G. Wang and C. Zhang}, {\em Observability inequalities and measurable sets},  J. Eur. Math. Soc. (JEMS) 16 (11) (2014), 2433-2475. 

\bibitem[BE]{BE}{\sc P. Borwein and T. Erd\'elyi}, {\em Generalizations of M\"untz's theorem via a Remez-type inequality for M\"untz spaces}, J. Amer. Math. Soc. 10 (1997) 327-349.

\bibitem[CZ]{CZ}{\sc C. Castro and   E. Zuazua},
{\em Unique continuation and control for the heat equation
from an oscillating lower dimensional manifold},
  SIAM J. Cont. Optim. 43 (4) (2005), 1400-1434.

\bibitem[Ch]{Ch}{\sc M. Choulli}, {\em Une introduction aux probl\`emes inverses elliptiques et paraboliques}, Math\'ematiques et Applications, 65, Springer, 2009.
 
\bibitem[DZ]{DZ}{\sc R. De Vore and E. Zuazua}, {\em Recovery of initial temperature from discrete sampling}, Math. Models and Methods in Appl. Sci. 24 (12) (2014), 2487-2501.

\bibitem[DFM]{DFM}{\sc L. D. Drager, R. L. Foote and C. F. Martin}, {\em Observing the heat equation on a torus along a dense geodesic}, Sys. Sci. Math. Sci. 4 (2) (1991), 186-192.

\bibitem[EHH]{EHH} {\sc A. El Badia, T. Ha Duong and A. Hamdi}, {\em Identification of a point source in a linear advection- dispersion-reaction equation: Application to a pollution source problem}, Inverse Problems  21 (2005), 1121-1136.

\bibitem[FR]{FR}{\sc H. O. Fattorini and D. L. Russell}, {\em Exact controllability theorems for linear parabolic equations in one space dimension}, Arch. Ration. Mech. Anal. 43 (1971) 272-292.

\bibitem[FZ]{FZ}{\sc E. Fern\`andez-Cara and E. Zuazua}, {\em The cost of approximate controllability for heat equations: the linear case}, Adv. Differential Equations 5 (4-6) (2000), 465-514.

\bibitem[Fu]{Fu} {\sc D. Fujiwara}, {\em Concrete characterization of the domains of fractional 
powers of some elliptic differential operators of the second order},
Proc. Japan Acad. 43 (1967) 82--86.

\bibitem[Ga]{Ga} {\sc W. Gautschi}, {\em On inverse of Vandermonde and confluent Vandermonde matrices}, Numer. Math. 4 (1962), 117-123.

\bibitem[GLM1]{GLM1}{\sc D. S. Gilliam, J. R. Lund and C. F. Martin}, {\em A Discrete sampling inversion scheme for the heat equation}, Numer. Math. 54 (1989), 493-506.

\bibitem[GLM2]{GLM2}{\sc D. S. Gilliam, J. R. Lund and C. F. Martin}, {\em Inverse parabolic problems and discrete orthogonality}, Numer. Math. 59 (1991), 361-383.

\bibitem[GMM]{GMM}{\sc D. S. Gilliam, B. A. Mair and C. F. Martin}, {\em Determination of initial states of parabolic systems from discrete data}, Inverse Problems 6 (1990), 737-747.

\bibitem[GM]{GM}{\sc D. S. Gilliam and C. F. Martin}, {\em Discrete observability and Dirichlet series}, Syst. Control Lett. 9 (1987), 345-348.

\bibitem[Ka]{Ka}{\sc O. Kavian}, {\em Introduction \`a la th\'eorie des points critiques et applications aux probl\`emes elliptiques}, Math\'ematiques et Applications, 13, Springer, 1993.


\bibitem[LOT]{LOT}{\sc Y. Li, S. Osher and R. Tsai}, {\em Heat source identification based on $\ell ^1$ constrained minimization}, Inverse Problems Imaging 8 (1) (2014), 199-221.

\bibitem[LTCW]{LTCW}{\sc G. Li, Y. Tan, J. Cheng and X. Wang}, {\em Determining magnitude of groundwater pollution sources by data compatibility analysis}, Inverse Problem in Science and Engineering 14 (2006), 287-300.

\bibitem[LL]{LL}{\sc J. Le Rousseau and J. Lebeau}, {\em On Carleman estimates for elliptic and parabolic operators. Applications to unique continuation and control of parabolic equations}, ESAIM Control Optim. Calc. Var. 18 (3) (2012) 712-747.

\bibitem[MRT]{MRT}{\sc S. Micu, I. Roventa and M. Tucsnak}, {\em Time optimal boundary controls for the heat equation}, J. Funct. Anal. 263 (2012) 25-49.


\bibitem[NSS]{NSS}{\sc G. Nakamura, S. Saitoh and A. Syarif}, {\em Representations of initial heat distributions by means of their heat distributions as functions of time}, Inverse Problems 15 (1999) 1255-1261.

\bibitem[PS]{PS}{\sc Y.~Privat, and M.~Sigalotti}, {\em The squares of the Laplacian-Dirichlet eigenfunctions are generically linearly independent}, ESAIM Control Optim. Calc. Var. 16  (3) (2010), 794-805. 

\bibitem[RR]{RR} M. Renardy and R. C. Rogers, An introduction to partial differential equations,
Springer-Verlag, New York, 1993.

\bibitem[SY]{SY}{\sc S. Saitoh and M. Yamamoto}, {\em Stability of Lipschitz type in determination of initial heat distribution}, J. Inequal. and Appl. 1 (1) (1997) 73-83.


\bibitem[TSS]{TSS}{\sc V. K. Tuan, S. Saitoh and S. Saigo}, {\em Size of support of initial heat distribution in the 1d heat equation}, Appl. Anal. 74 (3-4) (2000), 439-446.

\bibitem[Sc]{Sc}{\sc L. Schwartz}, {\em \'Etude des sommes d'exponentielles}, Hermann, Paris, 1959.

 
\bibitem[Wi]{Wi}{\sc D. V. Widder}, {\em An introduction to transform theory}, Academic Press, New York, 1971.


 \bibliographystyle{abbrv}

\end{thebibliography}
\end{document}